\numberwithin{equation}{section}
\newtheorem{thm}{Theorem}[section]
\newtheorem{prop}[thm]{Proposition}
\newtheorem{lem}[thm]{Lemma}
\newtheorem{cor}[thm]{Corollary}
\theoremstyle{definition} 
\newtheorem{eg}[thm]{Example}
\newtheorem{dfn}[thm]{Definition}
\theoremstyle{remark}
\newtheorem{rem}[thm]{Remark}
\newcommand{\beq}{\begin{equation}}
\newcommand{\eeq}{\end{equation}}
\newcommand{\be}{\begin{equation*}}
\newcommand{\ee}{\end{equation*}}
\newcommand{\C}{\mathbb{C}}
\newcommand{\Z}{\mathbb{Z}}
\newcommand{\mc}{\mathcal}
\newcommand{\sfv}{\mathsf{v}}
\newcommand{\sfh}{\mathsf{h}}
\newcommand{\sfe}{\mathsf{e}}
\newcommand{\sff}{\mathsf{f}}
\newcommand{\sfk}{\mathsf{k}}
\newcommand{\sfE}{\mathsf{E}}
\newcommand{\sfF}{\mathsf{F}}
\newcommand{\sfK}{\mathsf{K}}
\newcommand{\gl}{\mathfrak{gl}}
\newcommand{\fksl}{\mathfrak{sl}}
\newcommand{\fkS}{\mathfrak{S}}
\newcommand{\End}{\mathrm{End}}
\newcommand{\scrF}{\mathscr{F}}
\newcommand{\tl}{\tilde}
\newcommand{\gge}{\geqslant}
\newcommand{\lle}{\leqslant}
\newcommand{\la}{\lambda}
\newcommand{\bla}{\bm\lambda}
\newcommand{\wt}{\widehat}    
\newcommand{\ka}{\kappa}
\newcommand{\ve}{\varepsilon}
\newcommand{\qedd}{\tag*{$\square$}}
\newcommand{\daha}{{\ddot{\mathbb H}_{\ell}}}
\newcommand{\aha}{{\dot{\mathbb H}_{\ell}}}
\newcommand{\ha}{{{\mathbb H}_{\ell}}}
\newcommand{\lb}[1]{\llbracket #1 \rrbracket}
\newcommand{\s}{{\bm s}}
\newcommand{\uqhsls}{{\mathscr U_q(\wt{\fksl}_{\bm s})}}
\newcommand{\uqsls}{{\mathscr U_q({\fksl}_{\bm s})}}
\newcommand{\Sym}{{\mathrm{Sym}}}
\newcommand{\Es}{{\mathscr{E}_{\bm s}}}
\newcommand{\Vs}{{\mathscr{V}_{\bm s}}}
\newcommand{\Vsl}{{\mathscr V_\s^{\otimes \ell}}}
\newcommand{\uvs}{{\mathscr U_q^\sfv(\widehat{\fksl}_\s)}}
\newcommand{\uhs}{{\mathscr U_q^\sfh(\widehat{\fksl}_\s)}}
\begin{document}
\pagestyle{myheadings}
\setcounter{page}{1}

\title[Schur-Weyl duality for quantum toroidal superalgebras]{Schur-Weyl duality for quantum toroidal superalgebras}

\author{Kang Lu}
\address{K.L.: Department of Mathematics, University of Denver
\newline
\strut\kern\parindent \quad \ \ \   \ \   Department of Mathematics, University of Virginia}\email{\sf kang.lu@virginia.edu}

\begin{abstract} 
We establish the Schur-Weyl type duality between double affine Hecke algebras and quantum toroidal superalgebras, generalizing the well known result of Vasserot-Varagnolo \cite{VV96} to the super case.
		
		\medskip 
		
		\noindent
		{\bf Keywords:} Schur-Weyl duality, double affine Hecke algebra, quantum toroidal superalgebra 
		
\end{abstract}

\maketitle


\thispagestyle{empty}
\section{Introduction}	
In the last 30 years, quantum toroidal algebras \cite{GKV95} and double affine Hecke algebras (DAHA for short) \cite{Che92} are central objects in the area of representation theory. They have rich representation theory and also many important applications in algebra, combinatorics, geometry, and mathematical physics. In \cite{VV96}, it is shown that these two remarkable algebras are related via Schur-Weyl duality. 

Recently, quantum toroidal superalgebras associated to $\fksl_{m|n}$ for arbitrary root systems were introduced in \cite{BM19}. A related geometric construction of the Drinfeld half of quantum toroidal superalgebras using the deformed K-theoretic Hall algebra of a quiver with potential is given in \cite{VV22}. The present paper is devoted to establishing the Schur-Weyl duality between double affine Hecke algebras and quantum toroidal superalgebras, generalizing the well known result of Vasserot-Varagnolo \cite{VV96} to the super case. We expect that this duality could be an important tool to study representations of quantum toroidal superalgebras, cf. e.g. \cite{BL22}, and to obtain  results for super case from the (certain) known results in the even case, see e.g. \cite[Section 4]{LM20}.

\medskip 

Schur-Weyl duality, being one of the most important and beautiful classical results in representation theory, is the equivalence between the category of modules over the symmetric group $\fkS_\ell$ and the category of modules of level $\ell$ over the Lie algebra $\fksl_{n}$ for $\ell<n$. Since the introduction of quantum groups in the 1980s, it is interesting and important to generalize Schur-Weyl duality in the quantum setting. In fact, similar equivalences or related results have been established between finite Hecke algebras and quantum enveloping (super)algebras \cite{Jim86,Moo03,Mit06}, between degenerate affine Hecke algebras and (super) Yangians \cite{Dri86,Ara,LM20,Lu21}, between affine Hecke algebras and quantum affine (super)algebras \cite{Che87,GRV94,CP96,Fli18,KL20}, between double affine Hecke algebras (also called elliptic Cherednik algebras) and quantum toroidal algebras \cite{VV96},
between trigonometric Cherednik algebras and affine Yangians \cite{Gua05,Gua07},
and between rational Cherednik algebras and deformed double current algebras \cite{Gua05,Gua07}. These relations can be summarized  by combining the table below and the table in \cite[Introduction]{Rou05}\footnote{This is borrowed from N. Guay's talk in Representations and Lie Theory Seminar at Ohio State University.}.

\vskip -4cm

$$
\scalebox{0.94}{
\xy
(0,-40) *{\text{
\begin{tabular}{|l|l|l|}
\hline
quantum (super)algebras & quantum affine (super)algebras & quantum toroidal (super)algebras \\ \hline
 & (super) Yangians & affine (super) Yangians \\ \hline
 & & deformed double current (super)algebras \\ \hline
\end{tabular}
}}

\endxy
}
$$
More specifically, the (super)algebras in this table are the dual (superalgebras) for the corresponding algebras in the table of \cite[Introduction]{Rou05}.

It is also interesting to generalize the last two cases to the super setting. Note that the affine super Yangians (of type A associated to the standard root system) have been introduced in \cite{Ued19} while deformed double current superalgebras are not discussed in the literature yet.

\medskip

We almost follow the arguments used in \cite{VV96} except \cite[Theorem 3.3]{VV96} which was deduced by using the braid group action on algebras and integrable modules. A similar description of \cite[Theorem 3.3]{VV96} using affine Hecke algebra does not seem to work in the super case. In order to generalize \cite[Theorem 3.3]{VV96} to the super case, a modification of the action of affine Hecke algebra on polynomial tensor representation is probably needed, see \cite[Section 4]{GRV94}. We obtain similar results by investigating the coproduct of the quantum affine superalgebra, see Propositions \ref{prop:copro}, \ref{prop:tensor-vect-rep}.

\medskip

{\bf Acknowledgments.} The author thanks E. Mukhin for stimulating discussions and the referee for a careful reading of the manuscript and for pointing out a gap in the original proof of Proposition \ref{prop:tensor-vect-rep}. 

\section{Preliminaries}
We fix $m,n\in\Z_{\gge 0}$ such that $m\ne n$ and set $\kappa=m+n$. Set $I=\{1,2,\dots,\kappa-1\}$ and $\hat I=\{0,1,\dots,\kappa-1\}$. Fix $q\in \C^\times$ to be not a root of unity.

\subsection{Double affine Hecke algebras}
Let $\zeta\in \C^\times$ and $\ell\in\Z_{>0}$.
\begin{dfn}[\cite{Che92}]\label{def:DAHA}
	The \emph{double affine Hecke algebra} (or {\it elliptic Cherednik algebra}) of type $\gl_\ell$, denoted by $\ddot{\mathbb H}_{\ell}$, is the unital associative algebra with the generators $T_i^{\pm1}$, $X_j^{\pm 1}$, $Y_j^{\pm1}$, $1\lle i<\ell$, $1\lle j\lle \ell$, and the relations:
	$$
	T_iT_i^{-1}=T_i^{-1}T_i=1,\quad (T_i+1)(T_i- q^2)=0,\quad T_iT_{i+1}T_i=T_{i+1}T_iT_{i+1},
	$$ 
	$$
	X_0Y_1=\zeta Y_1X_0,\quad X_iX_j=X_iX_j,\quad Y_iY_j=Y_jY_i,\quad X_iX_i^{-1}=X_i^{-1}X_i=Y_iY_i^{-1}=Y_i^{-1}Y_i=1,
	$$
	$$
	T_iX_iT_i= q^2 X_{i+1},\quad T_i^{-1}Y_iT_i^{-1}= q^{-2}Y_{i+1},\quad X_2Y_1^{-1}X_2^{-1}Y_1= q^{-2} T_1^2,
	$$
	$$
	T_iT_j=T_jT_i\quad \text{if } |i-j|>1,\qquad X_jT_i=T_iX_j,\quad Y_jT_i=T_iY_j\quad \text{if }j\ne i,i+1,
	$$
where $X_0=X_1X_2\cdots X_\ell$.	\qed
\end{dfn} 

Here we set $\mathbf y=1$ in \cite[Definition 1.1]{VV96}.

\medskip

Let $\fkS_\ell$ be the symmetric group permuting the set $\{1,2,\dots,\ell\}$. Given an element $w\in \fkS_\ell$, let $T_w\in \ddot{\mathbb H}_{\ell}$ be the element defined in terms of a reduced expression of $w$.

For a sequence of $\ell$ integers $\bm r=(r_1,\dots,r_\ell)$, set $X^{\bm r}:=X_1^{r_1}\cdots X_\ell^{r_\ell}$ and $Y^{\bm r}:=Y_1^{r_1}\cdots Y_\ell^{r_\ell}$. It is known that elements $X^{\bm s} Y^{\bm r}T_w$ for all possible $\ell$-tuples $\bm s,\bm r$ and $w\in\fkS_\ell$ form a basis of $\ddot{\mathbb H}_{\ell}$, see e.g. \cite[Theorem 2.6 (a)]{Che92}.

Let $\dot{\mathbb H}_{\ell}^{(1)}$ and $\dot{\mathbb H}_{\ell}^{(2)}$ be the subalgebras of $\ddot{\mathbb H}_{\ell}$ generated by $T_i^{\pm 1}, Y_j^{\pm 1}$ and $T_i^{\pm 1}, X_j^{\pm 1}$, $1\lle i<\ell$, $1\lle j\lle \ell$, respectively. Then $\dot{\mathbb H}_{\ell}^{(1)}$ and $\dot{\mathbb H}_{\ell}^{(2)}$ are isomorphic to the affine Hecke algebra of type $\gl_\ell$, which we denote it by $\aha$. Similarly, the subalgebra generated by $T_i^{\pm 1}$, $1\lle i<\ell$, is isomorphic to the Hecke algebra of type $\gl_\ell$ and we denote it by $\mathbb H_{\ell}$.

For $1\lle i\lle j<\ell$, we use the convenient notation,
\beq\label{eq:Tij}
T_{i,j}:=T_{i}T_{i+1}\cdots T_j,\qquad T_{j,i}:=T_{j}T_{j-1}\cdots T_i.
\eeq

\medskip

The double affine Hecke algebra $\ddot{\mathbb H}_{\ell}$ admits another well-known presentation as follows.

\begin{prop}\label{prop:presentation}
	The double affine Hecke algebra $\ddot{\mathbb H}_{\ell}$ is the unital associative algebra with the generators $Q^{\pm 1}$, $T_i^{\pm1}$, $Y_j^{\pm1}$, $1\lle i<\ell$, $1\lle j\lle \ell$, and the relations:
	$$
	T_iT_i^{-1}=T_i^{-1}T_i=1,\quad (T_i+1)(T_i- q^2)=0,\quad T_iT_{i+1}T_i=T_{i+1}T_iT_{i+1},
	$$ 
	$$
QQ^{-1}=Q^{-1}Q=1,\quad Y_iY_j=Y_jY_i,\quad T_i^{-1}Y_iT_i^{-1}=q^{-2}Y_{i+1},\quad Y_iY_i^{-1}=Y_i^{-1}Y_i=1,
	$$
	$$
	T_iT_j=T_jT_i\quad \text{if } |i-j|>1,\quad Y_jT_i=T_iY_j\quad \text{if }j\ne i,i+1,
	$$
    $$
    QT_{i-1}Q^{-1}=T_i\ (1<i<\ell-1),\quad Q^2T_{\ell-1}Q^{-2}=T_{1},
    $$
    $$
    QY_{i}Q^{-1}= Y_{i+1}\ (1\lle i\lle \ell-1),\quad QY_{\ell}Q^{-1}=\zeta Y_{1}.
    $$
Here $Q$ is identified with $X_1T_{1,\ell-1}$ in Definition \ref{def:DAHA}. \qed
\end{prop}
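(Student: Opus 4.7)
The plan is to verify the proposition by constructing mutually inverse isomorphisms between the two presentations. In the forward direction (from Definition \ref{def:DAHA} to the presentation above), I would set $Q := X_1 T_{1, \ell-1}$ and check that each of the new relations holds in $\daha$. Only the relations involving $Q$ require work: the rotations $Q T_{i-1} Q^{-1} = T_i$ (for $2 \lle i \lle \ell-1$) and the wrap-around $Q^2 T_{\ell-1} Q^{-2} = T_1$ reduce, using $X_1 T_j = T_j X_1$ for $j \gge 2$ together with $T_1 X_1 T_1 = q^2 X_2$, to standard cyclic-shift identities for $T_1 T_2 \cdots T_{\ell-1}$ inside $\ha$. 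The relations $Q Y_i Q^{-1} = Y_{i+1}$ for $i < \ell$ follow by iterating $T_i^{-1} Y_i T_i^{-1} = q^{-2} Y_{i+1}$ and $Y_j T_i = T_i Y_j$ for $j \ne i, i+1$, while $Q Y_\ell Q^{-1} = \zeta Y_1$ is a direct repackaging of the elliptic relation $X_0 Y_1 = \zeta Y_1 X_0$.

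In the backward direction, starting from the presentation above, I would inductively set $X_1 := Q T_{1, \ell-1}^{-1}$ and $X_{i+1} := q^{-2} T_i X_i T_i$ for $1 \lle i \lle \ell-1$, and then verify the relations of Definition \ref{def:DAHA}. By construction $T_i X_i T_i = q^2 X_{i+1}$, and the commutations $X_j T_i = T_i X_j$ for $j \ne i, i+1$ follow from the $Q$-$T$ conjugation relations combined with the recursion. Iterating $Q Y_i Q^{-1} = Y_{i+1}$ (for $i < \ell$) and $Q Y_\ell Q^{-1} = \zeta Y_1$ yields $Q^\ell Y_1 Q^{-\ell} = \zeta Y_1$; a short induction shows $Q^\ell$ agrees with $X_0 := X_1 \cdots X_\ell$ up to an explicit power of $q$ (e.g.\ $Q^\ell = q^{\ell(\ell-1)} X_0$), so $X_0 Y_1 = \zeta Y_1 X_0$ follows. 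The elliptic relation $X_2 Y_1^{-1} X_2^{-1} Y_1 = q^{-2} T_1^2$ is then a direct calculation from $X_2 = q^{-2} T_1 Q T_{1, \ell-1}^{-1} T_1$ and $Q Y_1 Q^{-1} = Y_2$. Mutual invertibility of the two assignments is immediate on generators.

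The main obstacle I anticipate is proving commutativity $X_i X_j = X_j X_i$ in the backward direction, since the recursion only directly relates consecutive $X$'s. The cleanest resolution is to observe that the subalgebra generated by the $T_i^{\pm 1}$ and the newly defined $X_j^{\pm 1}$ realizes the Bernstein-Lusztig presentation of the affine Hecke algebra $\aha$, so that commutativity is inherited from the known structure of $\aha$. Once this is in place, the remaining DAHA relations reduce to routine bookkeeping.
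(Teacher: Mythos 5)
The paper offers no proof of this proposition at all: it is stated as the ``well-known presentation'' of $\daha$ and closed with a \qed, with the real content deferred to Lemmas \ref{lem:Q} and \ref{lem:P} (which record the conjugation properties of $Q_{i,j}$ and $P_r$ that the later arguments actually use). Your proposal therefore supplies an argument where the paper supplies none, and the strategy you choose --- two mutually inverse assignments $Q \mapsto X_1T_{1,\ell-1}$ and $X_1 \mapsto QT_{1,\ell-1}^{-1}$, $X_{i+1} \mapsto q^{-2}T_iX_iT_i$, with relation-checking in both directions --- is the standard and correct one. Your forward-direction computations are right (in particular $Q^\ell = q^{\ell(\ell-1)}X_0$ checks out, e.g.\ $Q^2 = q^2X_1X_2$ for $\ell=2$ and $Q^3=q^6X_1X_2X_3$ for $\ell=3$), and the relations $QY_aQ^{-1}=Y_{a+1}$, $QT_{b-1}Q^{-1}=T_b$ are exactly the content of the paper's Lemma \ref{lem:Q} specialized to $Q=Q_{1,\ell-1}$.

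The one place where your argument is thinner than it should be is the resolution of the commutativity obstacle in the backward direction. Saying that commutativity ``is inherited from the known structure of $\aha$'' via the Bernstein--Lusztig presentation is circular as phrased: commutativity of the lattice part is an axiom of that presentation, not a consequence of it, so you cannot quote it until you have first shown that the subalgebra generated by $T_1^{\pm1},\dots,T_{\ell-1}^{\pm1},Q^{\pm1}$ subject to the listed relations is the extended affine Hecke algebra in its Coxeter-type presentation. Concretely, you must introduce $T_0:=QT_{\ell-1}Q^{-1}$ and derive from the given relations the quadratic relation for $T_0$ and the braid relations between $T_0$ and $T_1,T_{\ell-1}$ (these do follow, by conjugating the known braid relations by $Q$ and using $QT_0Q^{-1}=T_1$, i.e.\ $Q^2T_{\ell-1}Q^{-2}=T_1$, together with $QT_{i-1}Q^{-1}=T_i$); only then does the Bernstein--Lusztig theorem give you a commuting family $X_1,\dots,X_\ell$ agreeing with your recursion. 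With that step made explicit the proof is complete; without it the key relation $X_iX_j=X_jX_i$ of Definition \ref{def:DAHA} is unverified.
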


For $1\lle i\lle j<\ell$ and $r<\ell$, set 
\beq\label{eq:Q-P}
Q_{i,j}:=X_iT_{i,j}\in \daha,\qquad P_r:=Q_{\ell-r,\ell-1}\cdots Q_{2,r+1}Q_{1,r}\in\daha.
\eeq
We shall need the following lemmas later. 
\begin{lem}[\cite{VV96}]\label{lem:Q}
	If $i\lle a\lle j$ and $i<b<j$, then 
	\[
	Q_{i,j}Y_aQ_{i,j}^{-1}= Y_{a+1},\qquad Q_{i,j}T_{b-1}Q_{i,j}^{-1}=T_{b}.\qedd
	\]
\end{lem}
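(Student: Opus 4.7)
The plan is to reduce both identities to Proposition \ref{prop:presentation}, which already supplies the analogous relations for the full rotation $Q=Q_{1,\ell-1}$, by expressing $Q_{i,j}$ as $Q$ sandwiched between Hecke factors that commute with the target.

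The first step is a factorization of $Q_{i,j}$. Iterating the cross relation $T_kX_kT_k=q^2X_{k+1}$ gives
\[
X_i=q^{-2(i-1)}\,(T_{i-1}T_{i-2}\cdots T_1)\,X_1\,(T_1T_2\cdots T_{i-1}),
\]
and after substituting this into $Q_{i,j}=X_iT_{i,j}$ and absorbing the concatenation $T_{1,i-1}T_{i,j}=T_{1,j}$, one obtains $Q_{i,j}=q^{-2(i-1)}T_{i-1,1}(X_1T_{1,j})$. Combining with $Q=X_1T_{1,\ell-1}=(X_1T_{1,j})T_{j+1,\ell-1}$ then yields
\[
Q_{i,j}=q^{-2(i-1)}\,T_{i-1,1}\,Q\,T_{j+1,\ell-1}^{-1},
\]
where $T_{i-1,1}$ and $T_{j+1,\ell-1}$ are read as empty products in the boundary cases $i=1$ and $j=\ell-1$ respectively.

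Both claims then collapse to sandwich arguments. The scalar $q^{-2(i-1)}$ cancels under conjugation, so for any element $V$ one has
\[
Q_{i,j}\,V\,Q_{i,j}^{-1}=T_{i-1,1}\bigl(Q\,(T_{j+1,\ell-1}^{-1}\,V\,T_{j+1,\ell-1})\,Q^{-1}\bigr)T_{i-1,1}^{-1}.
\]
For $V=Y_a$ with $i\lle a\lle j$: every $T_k$ appearing in $T_{j+1,\ell-1}$ has $k>j\gge a$, so $k\ne a,a+1$ and $T_k$ commutes with $Y_a$; hence the inner parenthesis collapses to $Y_a$. Proposition \ref{prop:presentation} then yields $QY_aQ^{-1}=Y_{a+1}$, valid since $1\lle a\lle \ell-1$. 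Finally, every $T_k$ in $T_{i-1,1}$ satisfies $k<i\lle a<a+1$, so $T_{i-1,1}$ commutes with $Y_{a+1}$, giving $Q_{i,j}Y_aQ_{i,j}^{-1}=Y_{a+1}$. For $V=T_{b-1}$ with $i<b<j$: the hypothesis forces $2\lle b\lle \ell-2$, precisely the range where Proposition \ref{prop:presentation} supplies $QT_{b-1}Q^{-1}=T_b$; the flanking Hecke factors are at index distance at least two from both $b-1$ and $b$, so they commute with each.

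The main obstacle will be careful index bookkeeping: deriving the factorization cleanly, keeping track of the scalar $q^{-2(i-1)}$ and its inverse, and checking that the outer Hecke factors commute with both the input $V$ and its $Q$-conjugate. Conceptually, Lemma \ref{lem:Q} expresses the fact that the partial rotation $Q_{i,j}$ acts on the window $\{i,i+1,\dots,j+1\}$ in the same way that the full rotation $Q$ acts on $\{1,2,\dots,\ell\}$, and the factorization above is the precise form of this principle.
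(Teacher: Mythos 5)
Your argument is correct and complete. Note that the paper itself gives no proof of Lemma \ref{lem:Q} --- it is quoted from \cite{VV96} with the proof omitted --- so there is no in-paper argument to compare against; your reduction to Proposition \ref{prop:presentation} via the factorization $Q_{i,j}=q^{-2(i-1)}\,T_{i-1,1}\,Q\,T_{j+1,\ell-1}^{-1}$ is a clean, self-contained derivation, and the index checks (that $1\lle a\lle \ell-1$ and $1<b<\ell-1$ land in the ranges where Proposition \ref{prop:presentation} applies, and that the flanking Hecke factors act trivially) all go through. One harmless imprecision: in the $T_{b-1}$ case you assert that the flanking factors are at index distance at least two from \emph{both} $b-1$ and $b$; for $T_{i-1,1}$ with $b=i+1$ the index $i-1$ is adjacent to $b-1=i$, so it need not commute with $T_{b-1}$. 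This does not affect the proof, since the sandwich structure only requires the inner factor $T_{j+1,\ell-1}$ to commute with the input $T_{b-1}$ and the outer factor $T_{i-1,1}$ to commute with the output $T_b$, both of which hold.
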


\begin{lem}[\cite{VV96}]\label{lem:P}
	If $r<a+1$ and $r<b<\ell$, then 
	\[
	P_rY_{a+1}P_r^{-1}=\zeta Y_{a-r+1},\qquad P_rT_{b}P_r^{-1}=T_{b-r}.\qedd
	\]
\end{lem}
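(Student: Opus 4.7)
The plan is to derive both identities from Proposition~\ref{prop:presentation} by first identifying $P_r$ with a scalar multiple of $Q^{\ell-r}$, where $Q=X_1T_{1,\ell-1}$ is the rotation element.

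The key step is to establish the identity
$$
P_r \;=\; q^{-(\ell-r-1)(\ell-r)}\, Q^{\ell-r}.
$$
I would prove this by substituting the iterated relation $X_k = q^{-2(k-1)} T_{k-1,1}\, X_1\, T_{1,k-1}$ (derived from $T_iX_iT_i=q^2X_{i+1}$) into each factor $Q_{k,k+r-1}=X_kT_{k,k+r-1}$, which gives
$$
Q_{k,k+r-1} \;=\; q^{-2(k-1)}\, T_{k-1,1}\, X_1\, T_{1,k+r-1}.
$$
Telescoping the product $P_r = Q_{\ell-r,\ell-1}\cdots Q_{1,r}$ via the braid relations, the $X_1$'s and $T$-words collapse to $(X_1 T_{1,\ell-1})^{\ell-r} = Q^{\ell-r}$, while the scalars combine to $q^{-2\sum_{k=1}^{\ell-r}(k-1)} = q^{-(\ell-r-1)(\ell-r)}$. (The small cases $\ell=3,r=1$ and $\ell=4,r\in\{1,2\}$ can be checked by hand to pin down the exponent, and the general case then follows by a straightforward induction on $\ell-r$.)

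Given this identification, both conjugation identities follow immediately from Proposition~\ref{prop:presentation}. For $a+1\in\{r+1,\dots,\ell\}$, the $\ell-r$ successive applications of $QY_iQ^{-1}=Y_{i+1}$ (for $i<\ell$) and $QY_\ell Q^{-1}=\zeta Y_1$ carry $Y_{a+1}$ through the indices $a+2,\dots,\ell,1,\dots,a-r+1$, crossing the cyclic boundary exactly once and producing the lone factor of $\zeta$, so $P_r Y_{a+1}P_r^{-1}=\zeta Y_{a-r+1}$. The $T$-identity is analogous: for $b\in\{r+1,\dots,\ell-1\}$, iterating $QT_jQ^{-1}=T_{j+1}$ (for $j=1,\dots,\ell-2$, the case $j=\ell-2$ being derivable from the first presentation) together with $Q^2T_{\ell-1}Q^{-2}=T_1$ shifts $T_b$ to $T_{b-r}$ after $\ell-r$ steps, one of which uses the $Q^2$-wrap at the boundary $T_{\ell-1}\mapsto T_1$.

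The main technical obstacle is the scalar identification $P_r = q^{-(\ell-r-1)(\ell-r)} Q^{\ell-r}$: the telescoping of the long $T$-words via braid relations is conceptually routine but combinatorially intricate, and keeping track of the exact power of $q$ requires careful bookkeeping of the successive applications of $T_iX_iT_i=q^2X_{i+1}$. Once this identity is in hand, everything else reduces to a direct iteration of the relations of Proposition~\ref{prop:presentation}.
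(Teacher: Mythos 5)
The paper does not prove this lemma at all --- it is quoted from \cite{VV96} and stamped with a q.e.d.\ immediately after the statement --- so there is no in-paper argument to compare against; your proposal supplies a genuine proof, and its strategy is sound. Reducing everything to the single identity $P_r=c\,Q^{\ell-r}$ (with $c\in\C^\times$) and then reading off both conjugation formulas from the cyclic relations of Proposition~\ref{prop:presentation} is correct: I checked the identity $P_r=q^{-(\ell-r)(\ell-r-1)}Q^{\ell-r}$ explicitly for $(\ell,r)=(3,1),(3,2),(4,1),(4,2)$, your index bookkeeping for the $Y$- and $T$-orbits (exactly one $\zeta$-wrap for $Y$, exactly one $Q^2$-wrap for $T$, both guaranteed by the hypotheses $r<a+1$ and $r<b<\ell$) is right, and you correctly noticed that $QT_{\ell-2}Q^{-1}=T_{\ell-1}$ must be derived separately since it is not among the listed relations. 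Two comments. First, the scalar $c$ is irrelevant --- conjugation kills it --- so the most labor-intensive part of your plan (pinning down the exact power of $q$) is unnecessary; you only need $P_r\in\C^\times Q^{\ell-r}$. Second, and this is the one soft spot: the ``telescoping'' as you describe it does not literally happen. After substituting $Q_{k,k+r-1}=q^{-2(k-1)}T_{k-1,1}X_1T_{1,k+r-1}$, adjacent factors meet as $\cdots T_{1,k+r}\cdot T_{k-1,1}X_1\cdots$, and regrouping these into copies of $X_1T_{1,\ell-1}$ requires genuine braid-relation manipulations in which inverses $T_i^{-1}$ appear and must cancel (they do, but not by mere rearrangement). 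A cleaner route to the same identity: move all the $X$'s in $P_r$ to the left using $X_kT_i=T_iX_k$ for $k<i$, obtaining $P_r=X_1\cdots X_{\ell-r}\,T_{\ell-r,\ell-1}T_{\ell-r-1,\ell-2}\cdots T_{1,r}$, where the $T$-word is a reduced expression of length $r(\ell-r)$ for $\sigma^{\ell-r}$ ($\sigma=s_1\cdots s_{\ell-1}$); then verify by induction that $Q^{\ell-r}$ has the same normal form up to a power of $q$. With that substitution your argument is complete.
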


\subsection{Quantum affine superalgebras}
Let $\bm s=(s_1,\dots,s_{\ka})$ where $s_i\in \{\pm 1\}$ and the occurrence of $1$ is exactly $m$. We call such a sequence $\bm s$ a \emph{parity sequence}. Denote the set of all parity sequences by $S_{m|n}$. We call the parity $\bm s=(1,\dots,1,-1,\dots,-1)$ the {\it standard parity sequence}. For an $\bm s\in S_{m|n}$, we extend it to $\bm s=(s_i)_{i\in\Z}$ by enforcing periodicity, $s_{i+\ka}=s_i$ for all $i\in\Z$.

Given a parity sequence $\bm s\in S_{m|n}$, we have the Cartan matrix $A^{\bm s}=(a_{i,j}^{\bm s})_{i,j\in I}$ and the affine Cartan matrix $\hat A^{\bm s}=(a_{i,j}^{\bm s})_{i,j\in \hat I}$ given by
\beq\label{aff cartan}
a_{i,j}^{\bm s}=(s_i+s_{i+1})\delta_{i,j}-s_i\delta_{i,j+1}-s_j\delta_{i+1,j},\qquad i,j\in \hat I.
\eeq

Denote $\mathfrak{sl}_\s$ and $\widehat{\mathfrak{sl}}_\s$ be the Lie superalgebras corresponding to Cartan matrices $A^{\s}$ and $\hat{A}^\s$, respectively. Note that the Lie superalgebras $\mathfrak{sl}_\s$ (resp. $\widehat{\mathfrak{sl}}_\s$) are all isomorphic for all $\s\in S_{m|n}$.

Let $\mathcal P_{\s}$ be the integral lattice spanned by the basis $\ve_i$, $1\lle i\lle \ka$, with a bilinear form on it defined by $\langle \ve_i|\ve_j\rangle =s_i\delta_{i,j}$ for $1\lle i,j\lle \ka$. Set $\alpha_i:=\ve_i-\ve_{i+1}$ for $i\in I$ and let $\mathcal Q_{\s}:=\bigoplus_{i\in I}\Z \alpha_i$ be the root lattice of $\mathfrak{sl}_\s$.

Let $\delta$ be the null root of $\widehat{\mathfrak{sl}}_\s$  such that $\langle \delta|\delta\rangle=\langle \delta|\alpha_i\rangle=0$ for $i\in I$. Let $\alpha_0:=\delta+\ve_\ka-\ve_1$. Then $\langle \alpha_i|\alpha_j\rangle =a_{i,j}^\s$ for $i,j\in\hat I$.

For two homogeneous elements $X,Y$ and $a\in\C$, set $[X,Y]_a=XY-(-1)^{|X||Y|}aYX$. We simply write $[X,Y]$ for $[X,Y]_1$.

\begin{dfn}[\cite{Yam99} Drinfeld-Jimbo presentation]
The quantum affine superalgebra $\uqhsls$ is generated by the {\it Chevalley generators} $e_i$, $f_i$, $t_i^{\pm 1}$, $i\in \hat I$, whose parities are given by $|e_i|=|f_i|=|i|:=(1-s_is_{i+1})/2$, $|t_i^{\pm 1}|=0$, with the defining relations given by
\begin{align*}
	&t_it_j=t_jt_i,\quad t_it_i^{-1}=t_{i}^{-1}t_i=1,\quad t_ie_jt_i^{-1}=q^{a^{\s}_{i,j}}e_j,\quad t_if_jt_i^{-1}=q^{-a^{\s}_{i,j}}f_j,\\
	&[e_i,f_j]=\delta_{i,j}\frac{t_i-t_i^{-1}}{q-q^{-1}},\\
	  & [e_i,e_j]=[f_i,f_j]=0                                                                                           & (a^{\s}_{i,j}=0),           \\
	  & \lb{e_i,\lb{e_i,e_{i\pm 1}}}=\lb{f_i,\lb{f_i,f_{i\pm 1}}}=0                                                     & (a^{\s}_{i,i}\neq 0),       \\
	  & \lb{e_i,\lb{e_{i+ 1},\lb{e_i,e_{i- 1}}}}=\lb{f_i,\lb{f_{i+ 1},\lb{f_i,f_{i- 1}}}}=0                             & (mn\neq 2, a^{\s}_{i,i}=0), \\
	  & \lb{e_{i+1},\lb{e_{i-1},\lb{e_{i+1},\lb{e_{i-1},e_i}}}}=\lb{e_{i-1},\lb{e_{i+1},\lb{e_{i-1},\lb{e_{i+1},e_i}}}} & (mn=2, a^{\s}_{i,i}\neq 0), \\
	  & \lb{f_{i+1},\lb{f_{i-1},\lb{f_{i+1},\lb{f_{i-1},f_i}}}}=\lb{f_{i-1},\lb{f_{i+1},\lb{f_{i-1},\lb{f_{i+1},f_i}}}} & (mn=2, a^{\s}_{i,i}\neq 0), 
\end{align*}
where $\lb{X,Y}=[X,Y]_{q^{-\langle\beta|\gamma\rangle}}$ if $t_iXt_i^{-1}=q^{\langle\alpha_i|\beta\rangle}$ and $t_iYt_i^{-1}=q^{\langle\alpha_i|\gamma\rangle}$ for $\beta,\gamma\in\mc Q_\s$ and  $i\in I$.\qed
\end{dfn}

Note that the element $t_0t_1\cdots t_{\ka-1}$ is central and $\uqhsls$ for different $\s\in S_{m|n}$ are isomorphic. 

The superalgebra $\uqhsls$ is endowed with a coproduct $\Delta$ given by
\begin{equation}\label{eq:coproduct}
\Delta(e_i)=e_i\otimes t_i+1\otimes e_i,\quad \Delta(f_i)=f_i\otimes 1+t_i^{-1}\otimes f_i,\quad \Delta(t_i)=t_i\otimes t_i.
\end{equation}
The subalgebra of $\uqhsls$ generated by $e_i,f_i,t_i$, $i\in I$, is isomorphic to $\uqsls$ as a Hopf subalgebra.

\medskip 

The superalgebra $\uqhsls$ admits another presentation as follows.

Let $\delta(z)=\sum_{r\in \Z}z^{r}$ be the formal delta function. For $k\in \Z$, set $[k]=\frac{q^k-q^{-k}}{q-q^{-1}}$.
\begin{dfn}[\cite{Yam99} New Drinfeld Presentation]
The superalgebra  $\uqhsls$ is generated by the {\it current generators} $x^\pm_{i,r}, h_{i,r}$, $k^{\pm 1}_i, c^{\pm 1}$, $i \in I$, $r\in \Z'$. Here and below, we use the following convention: $r\in\Z'$ means $r\in \Z$ if $r$ is an index of a non-Cartan current generator $x^\pm_{i,r}$, and $r\in\Z'$ means $r\in \Z\setminus\{0\}$ if $r$ is an index of a Cartan current generator $h_{i,r}$. The parity of current generators is given by $|x^\pm_{i,r}|=|i|=(1-s_is_{i+1})/2$ while all remaining generators have parity $0$. The defining relations are as follows:
\begin{align*}
	&\text{$c$ is central},\quad k_ik_j=k_jk_i,\quad k_ik_i^{-1}=k_i^{-1}k_i=1,\quad k_ix^\pm_j(z)k_i^{-1}=q^{\pm a^{\s}_{i,j}}x^\pm_j(z),\\
	&[h_{i,r},h_{j,s}]=\delta_{r+s,0}\,\frac{[ra^{\s}_{i,j}]}{r}\frac{c^r-c^{-r}}{q-q^{-1}},\\
	&[h_{i,r},x^{\pm}_j(z)]=\pm\frac{[ra^{\s}_{i,j}]}{r}c^{-(r\pm|r|)/2}z^rx^\pm_j(z),\\
	&[x^+_i(z),x^-_j(w)]=\frac{\delta_{i,j}}{q-q^{-1}}\Bigl(\delta\Big(c\frac{w}{z}\Big)k_i^+(w)-\delta\Big(c\frac{z}{w}\Big)k_i^-(z)\Bigr),\\
	  & (z-q^{\pm a^{\s}_{i,j}}w)x^\pm_i(z)x^\pm_j(w)+(-1)^{|i||j|}(w-q^{\pm a^{\s}_{i,j}}z)x^\pm_j(w)x^\pm_i(z)=0 & (a^{\s}_{i,j}\neq 0),              \\
	  & [x^\pm_i(z),x^\pm_j(w)]=0                                                                                  & (a^{\s}_{i,j}=0),                  \\
	  & \mathrm{Sym}_{z_1,z_2}\lb{x^\pm_i(z_1),\lb{x^\pm_i(z_2),x^\pm_{i\pm 1}(w)}}=0\,                                    & (a^{\s}_{i,i}\neq 0,\ i\pm1\in I), \\
	  & \mathrm{Sym}_{{z_1,z_2}}\lb{x^\pm_i(z_1),\lb{x^\pm_{i+ 1}(w_1),\lb{x^\pm_i(z_2),x^\pm_{i- 1}(w_2)}}}=0             & (a^{\s}_{i,i}=0,\ i\pm 1 \in I),   
\end{align*}
where $x^\pm_i(z)=\sum_{r\in \Z}x^\pm_{i,r}z^{-r}$ and 
\[
k_i^\pm(z)=k_i^{\pm 1}\exp \Big(\pm (q-q^{-1})\sum_{r>0}h_{i,\pm r}z^{\mp r}\Big)=k_i^{\pm 1}+\sum_{r\gge 1}k_{i,\pm r}^\pm z^{\mp r}.
\]
Here and below, $\mathrm{Sym}_{{z_1,z_2}}$ stands for the symmetrization map on $z_1,z_2$. For instance,
\[
\mathrm{Sym}_{z_1,z_2}\lb{x^\pm_i(z_1),\lb{x^\pm_i(z_2),x^\pm_{i\pm 1}(w)}}=\lb{x^\pm_i(z_1),\lb{x^\pm_i(z_2),x^\pm_{i\pm 1}(w)}}+\lb{x^\pm_i(z_2),\lb{x^\pm_i(z_1),x^\pm_{i\pm 1}(w)}}.
\]
\end{dfn}

An isomorphism between Drinfeld-Jimbo and new Drinfeld presentations is given by
\begin{align*}
		  & e_i\mapsto x^+_{i,0},\quad f_i\mapsto x^-_{i,0},\quad t_i\mapsto k_i \qquad\qquad\qquad\qquad\qquad\qquad\qquad\qquad\qquad\qquad\qquad\qquad(i \in I), \\
		&t_0\mapsto c(k_1k_2\cdots k_{\ka-1})^{-1},\\
		&e_0\mapsto (-1)^ns_{\ka}[x^-_{m+n-1,0},\cdots,[x^-_{2,0},x^-_{1,1}]_{q_2^{-1}}\cdots]_{q_{\ka-1}^{-1}}(k_1k_2\cdots k_{\ka-1})^{-1},\\
		&f_0\mapsto s_{\ka}k_1k_2\cdots k_{\ka-1}[\cdots [x^+_{1,-1},x^+_{2,0}]_{q_2},\cdots, x^+_{\ka-1,0}]_{q_{\ka-1}},
\end{align*}
where $q_i=q^{s_i}$ for $i\in I$, see e.g. \cite[Theorem 5.2]{Zha14} and \cite{LYZ22}. Note that $t_0t_1\cdots t_{\ka-1}\mapsto c$.
	
\subsection{Representations of quantum affine superalgebras}
For simplicity, let $$\mathscr U:=\uqhsls/(c-1)$$ be the quantum loop superalgebra. Here and below we use the same notation for the images of the generators in $\uqhsls$ under the quotient. 

The quantum loop superalgebra $\mathscr U$ has a Hopf superalgebra structure inherited from $\uqhsls$. We shall need the following coproduct formula.

We start with introducing necessary notations.

There is a natural $\mc Q_\s$-grading on $\mathscr U$ given by
\[
(\mathscr U)_{\alpha}:=\{u\in \mathscr U ~|~ k_iuk_i^{-1}=q^{\langle\alpha_i|\alpha\rangle}u,\text{ for }i\in I\},\quad \alpha \in \mc Q_\s.
\]
Let $\mc Q_{\s}^{+}$ be the positive root lattice, $\mc Q_{\s}^{+}:=\bigoplus_{i\in I}\Z_{\gge 0}\alpha_i$. Define the length function $\imath:\mc Q_{\s}^{+}\to \Z_{\gge 0}$ by
\[
\imath\big(\sum_{i\in I}n_i\alpha_i\big)=\sum_{i\in I}n_i.
\]
Moreover, whenever $\imath(\alpha)$ is used, we implicitly assume that $\alpha \in \mc Q_{\s}^{+}$. Note that $\imath$ depends on $\s$. However, we shall not write $\s$ explicitly. Finally, for $i \in I$, let $\mathscr U_i^\pm$ be the subalgebra of $\mathscr U$ generated by $x_{j,0}^\pm$ for $j\in I\setminus\{i\}$.

\begin{prop}\label{prop:copro}
Let $i\in I$ and $r\in \Z$. We have the following properties for the coproduct of $\mathscr U$,
\begin{enumerate}
\item modulo $\sum_{\alpha\in \mc Q_\s^+\setminus\{0\}}(\mathscr U)_{\alpha}\otimes(\mathscr U)_{-\alpha}$,
\[\Delta(k_i^{\pm}(z))\equiv  k_i^{\pm}(z)\otimes k_i^{\pm}(z),\]
\item modulo $\sum_{\imath(\alpha)>1}(\mathscr U_i^+)_{\alpha}\otimes(\mathscr U)_{\alpha_i-\alpha}+\sum_{\imath(\alpha-\alpha_i)>0}(\mathscr U)_{\alpha}\otimes(\mathscr U)_{\alpha_i-\alpha}$,
\begin{align*}
&\Delta(x_{i,r}^+)\equiv x_{i,r}^+\otimes k_i+1\otimes x_{i,r}^+ +\sum_{j=1}^{r} x_{i,r-j}^+\otimes k_{i,j}^+, &(r\gge 0),\\
&\Delta(x_{i,r}^+)\equiv x_{i,r}^+\otimes k_i^{-1}+1\otimes x_{i,r}^+ +\sum_{j=1}^{-r-1} x_{i,r+j}^+\otimes k_{i,-j}^-, &(r< 0),
\end{align*}
\item modulo $\sum_{\imath(\alpha)>1}(\mathscr U)_{\alpha-\alpha_i}\otimes(\mathscr U_i^-)_{-\alpha}+\sum_{\imath(\alpha-\alpha_i)>0}(\mathscr U)_{\alpha-\alpha_i}\otimes(\mathscr U)_{-\alpha}$,
\begin{align*}
&\Delta(x_{i,r}^-)\equiv x_{i,r}^-\otimes 1+k_i\otimes x_{i,r}^- +\sum_{j=1}^{r-1} k_{i,j}^+\otimes x_{i,r-j}^-, &(r> 0),\\
&\Delta(x_{i,r}^-)\equiv x_{i,r}^-\otimes 1+k_i^{-1}\otimes x_{i,r}^- +\sum_{j=1}^{-r} k_{i,-j}^-\otimes x_{i,r+j}^-, &(r\leqslant 0).
\end{align*}
\end{enumerate}
\end{prop}
\begin{proof}
The proof follows from that of \cite[Proposition 5.4]{Zha14} and \cite[Proposition 3.6]{Zha16}, cf. also \cite[Proposition 4.4]{CP91}. We only sketch the key points. 

First, one shows as in \cite[Lemma 5.3]{Zha14} and \cite[Lemma A.3]{Zha16} that for any $s\in I$, 
\[
\Delta(h_{i,1})\equiv h_{i,1}\otimes 1+1\otimes h_{i,1}+\sum_{j\in I,j=i-1}^{i+1} \nu_{i,j} x_{j,0}^+\otimes x_{j,1}^{-},
\]
modulo $\sum_{\imath(\alpha)>1}(\mathscr U_s^+)_{\alpha}\otimes(\mathscr U)_{\alpha}+\sum_{\imath(\alpha-\alpha_s)>0}(\mathscr U)_{\alpha}\otimes (\mathscr U)_{-\alpha}$, where $\nu_{i,j}\in \C^\times$ and $\nu_{i,i+1}=q-q^{-1}$. Then by induction, one proves the formula for $\Delta(x_{i,r}^+)$, $r\gge 0$, as in \cite[Lemma A.4]{Zha16}. In this step, one needs to calculate explicitly certain coefficients which are obvious from the commutator relations, cf. \cite[Proposition 5.4]{Zha14}. Similarly, one obtains the other coproduct formulas for $\Delta(x_{i,r}^\pm)$ with proper modifications. The proof for $\Delta(k_i^\pm(z))$ is parallel to that of  \cite[Corollary A.5]{Zha16}.
\end{proof}

Given a parity sequence $\bm s\in S_{m|n}$, define a map $\mu_{\bm s}:\,\hat I\to \Z$ by
\[
\mu_{\bm s} (i):=\sum_{j=1}^i s_j,\qquad i\in\hat I
\]
where, by convention, $\mu_{\bm s}(0)=0$.

For $r\in\Z$, set
\[
\psi_r(z)=\frac{q^r-q^{-r}z}{1-z}.
\]
Then $\psi_{r}(0)\psi_{r}(\infty)=1$.

For a rational function $\phi(z)$ such that $\phi(0)\phi(\infty)=1$, denote by $\phi^\pm(z)$ the expansions of $\phi(z)$ as power series in $z^{\mp 1}$, respectively.
\begin{eg}[{cf. \cite[Lemma 3.1]{BM21}}]\label{eg:vector-rep}
Let $\mathscr V_{\bm s}\cong \C^{m|n}$ be the superspace with a basis $v_j$ for $1\lle j\lle \ka$ such that $|v_j|=(1-s_{j})/2$. Let $\xi$ be a formal variable and set $\mathscr V_{\bm s}(\xi):=\C[\xi^{\pm 1}]\otimes\mathscr V_{\bm s}$. Then $\mathscr V_{\bm s}(\xi)$ has a $\uqhsls$-module structure as follows,
\[
x_i^+(z)\xi^rv_j=\delta_{i+1,j}\delta(q^{\mu_{\bm s}(i)}\xi/z)\xi^rv_{j-1},
\]
\[
x_i^-(z)\xi^rv_j=s_{j}\delta_{i,j}\delta(q^{\mu_{\bm s}(i)}\xi/z)\xi^rv_{j+1},
\]
\[
k_{i}^\pm(z)\xi^r v_j=\begin{cases}
\psi_{s_i}^\pm(q^{\mu_{\bm s}(i)}\xi/z)\xi^rv_{j},\quad &\text{if }i=j,\\
\psi_{-s_{i+1}}^\pm(q^{\mu_{\bm s}(i)}\xi/z)\xi^rv_{j},\quad &\text{if }i=j-1,\\
\xi^rv_{j}, &\text{otherwise}.
\end{cases}
\]
Moreover, $c$ acts by identity.\qed
\end{eg}

One can also specialize $\xi$ to a nonzero complex number $a$. Then the same relations define a $\uqhsls$-module structure on $\mathscr V_{\bm s}$ which is the evaluation vector representation at the evaluation parameter $a$. We denote it by $\mathscr V_{\bm s}(a)$.

The action of Chevalley generators on $\Vs(a)$ is given by
\beq\label{eq:finite-action}
e_i(v_j)=\delta_{i+1,j}v_{j-1},\quad f_i(v_j)=s_j\delta_{i,j}v_{j+1},\quad t_i(v_j)=q^{s_j(\delta_{i,j}-\delta_{i+1,j})}v_j,\qquad (i\in I),
\eeq
\[
e_0(v_j)=\delta_{1,j}av_\ka,\qquad f_0(v_j)=s_{\ka}\delta_{\ka,j}a^{-1}v_1,\qquad t_0(v_j)=q^{s_j(\delta_{\ka,j}-\delta_{1,j})}v_j.
\]
The restriction of $\Vs(a)$ to a $\uqsls$-module is called the {\it vector} (or {\it natural}) {\it representation} of $\uqsls$.

A $\uqhsls$-module (resp. $\uqsls$-module) is called {\it integrable} if it is a weight module over $\uqsls$ and $e_i$, $f_i$, for all $i\in \hat I$ (resp. $i\in I$), act locally nilpotent. Clearly, $\Vs(\xi)$ and $\Vs(a)$ are integrable.

\medskip

Let $\ell\in\Z_{>0}$ and let $\bm\xi=(\xi_1,\dots,\xi_\ell)$ be a sequence of commuting formal variables. Denote by $\mathscr V_{\bm s}(\bm\xi)$ the tensor product $\mathscr V_{\bm s}(\xi_1)\otimes \mathscr V_{\bm s}(\xi_2)\otimes\cdots\otimes \mathscr V_{\bm s}(\xi_\ell)$. Then $\mathscr V_{\bm s}(\bm \xi)$ is a $\uqhsls$-module induced by the coproduct \eqref{eq:coproduct}. Note that the $\uqhsls$-action on $\Vs(\bm\xi)$ commutes with multiplication by the elements of  $\C[\xi_1^{\pm1},\dots,\xi_\ell^{\pm1}]$.

Let $\sfe_{\theta}$, $\sff_{\theta}$, $\sfk_{\theta}$ in $\End(\Vs)$ be defined by
\beq\label{eq:theta-oper1}
\sfe_{\theta}v_j=\delta_{j,\ka}v_1,\qquad \sff_\theta v_j=\delta_{j,1}v_\ka,\qquad \sfk_\theta v_j=q^{s_j(\delta_{j,1}-\delta_{j,\ka})}v_j. 
\eeq
Define 
\beq\label{eq:theta-oper2}
\sfe_{\theta,j}=\sfk_\theta^{\otimes j-1}\otimes \sfe_\theta\otimes 1^{\otimes \ell-j}, \qquad\sff_{\theta,j}=1^{\otimes j-1}\otimes \sff_\theta\otimes (\sfk_\theta^{-1})^{\otimes \ell-j}
\eeq
in $\End(\Vsl)$. Then for any $\bm v\in \Vsl\subset \Vs(\bm\xi)$, we have
\beq\label{eq:z0-oper}
e_0 \bm v =\sum_{j=1}^\ell \xi_j\sff_{\theta,j}\bm v,\qquad 
f_0 \bm v =s_{\ka}\sum_{j=1}^\ell \xi_j^{-1}\sfe_{\theta,j}\bm v,\qquad 
t_0 \bm v = (\sfk_\theta^{-1})^{\otimes \ell}\bm v, 
\eeq

For any sequence of integers $\bm j=(j_1,j_2,\dots,j_\ell)$, we say that $\bm j$ is {\it non-decreasing} if $1\lle j_1\lle \cdots\lle j_\ell\lle \kappa$. For any non-decreasing $\bm j$ and $1\lle r\lle \ell$, set 
$$
v_{\bm j}=v_{j_1}\otimes \cdots\otimes v_{j_\ell},\qquad \bm j_r^{\pm}=(j_1,\dots,j_{r-1},j_r\pm 1,j_{r+1},\dots,j_\ell),$$
and define
\beq\label{eq:parity-j}
|\bm j_r|=\sum_{a=1}^{r-1}|v_{j_a}|,\qquad \iota_\s(i,r;\bm j)=(-1)^{|i||\bm j_r|}.
\eeq

For nonnegative integers $a<b$, we use the convenient notation $(a,b]:=\{a+1,\dots,b\}$. Regard $\bm j$ as a map from $(0,\ell]$ to $(0,\ka]$.

For $a\in \C^\times$ (or any formal variable $\xi$) and a rational function $\phi(z)$ such that $\phi(0)\phi(\infty)=1$, define the normal order products by
\begin{align*}
:[\delta(a/z)\phi(z)]^+:=\phi^+(z)\sum_{r\gge 0}a^rz^{-r}+\phi^-(z)\sum_{r> 0}a^{-r}z^{r},\\
:[\delta(a/z)\phi(z)]^-:=\phi^+(z)\sum_{r> 0}a^rz^{-r}+\phi^-(z)\sum_{r\gge 0}a^{-r}z^{r}.
\end{align*}

\begin{prop}\label{prop:tensor-vect-rep}
If $\bm j$ is non-decreasing, we have the following in $\mathscr V_{\bm s}(\bm \xi)$,
\begin{align*}
&x_i^+(z)v_{\bm j}=\sum_{r=a_2+1}^{a_3}\iota_\s(i,r;\bm j)	:\Big[\delta(q^{\mu_{\bm s}(i)}\xi_r/z)\prod_{p=r+1}^{a_3}\psi_{-s_{i+1}}(q^{\mu_{\bm s}(i)}\xi_p/z)\Big]^+:v_{\bm j_{r}^-},\\
&x_i^-(z)v_{\bm j}=s_i\sum_{r=a_1+1}^{a_2}\iota_\s(i,r;\bm j)	:\Big[\delta(q^{\mu_{\bm s}(i)}\xi_r/z)\prod_{p=a_1+1}^{r-1}\psi_{s_{i}}(q^{\mu_{\bm s}(i)}\xi_p/z)\Big]^-:v_{\bm j_{r}^+},\\
&k_i^\pm(z)v_{\bm j}=\prod_{j_r=i}\psi^\pm_{s_{i}}(q^{\mu_{\bm s}(i)}\xi_r/z)\prod_{j_r=i+1}\psi^\pm_{-s_{i+1}}(q^{\mu_{\bm s}(i)}\xi_r/z)v_{\bm j},
\end{align*}
where $(a_1,a_2]=\bm j^{-1}(i)$ and $(a_2,a_3]=\bm j^{-1}(i+1)$.
\end{prop}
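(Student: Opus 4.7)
My plan is to prove this by induction on $\ell$.

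The base case $\ell = 1$ is immediate from Example \ref{eg:vector-rep}: for $\bm j=(j_1)$, each of $(a_1,a_2]=\bm j^{-1}(i)$ and $(a_2,a_3]=\bm j^{-1}(i+1)$ is either $\{1\}$ or empty, the sign $(-1)^{\iota_\s(i,1;\bm j)}=1$ (since $|\bm j_1|=0$), the products over $p$ are empty, and the normal-ordered delta reduces to $\delta(q^{\mu_\s(i)}\xi_1/z)$, matching Example \ref{eg:vector-rep} term by term.

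For the inductive step, decompose $\Vs(\bm\xi)=\Vs(\bm\xi')\otimes\Vs(\xi_\ell)$ with $\bm\xi'=(\xi_1,\dots,\xi_{\ell-1})$ and write $v_{\bm j}=v_{\bm j'}\otimes v_{j_\ell}$, where $\bm j'=(j_1,\dots,j_{\ell-1})$ is non-decreasing with $j_{\ell-1}\le j_\ell$. First repackage Proposition \ref{prop:copro}(1) in generating-series form, by summing its two congruences over $r\ge 0$ and $r<0$ respectively and using $\sum_{j\ge 1}k_{i,j}^+z^{-j}=k_i^+(z)-k_i$ and $\sum_{j\ge 1}k_{i,-j}^-z^{j}=k_i^-(z)-k_i^{-1}$, to obtain
\[
\Delta(x_i^+(z))\equiv x_i^+(z)_{\ge 0}\otimes k_i^+(z)+x_i^+(z)_{<0}\otimes k_i^-(z)+1\otimes x_i^+(z)\pmod{\mathscr U\mathscr X_+^2\otimes\mathscr U\mathscr X_-}.
\]
Applying this to $v_{\bm j'}\otimes v_{j_\ell}$: the first two terms combine the induction hypothesis for $x_i^+(z)v_{\bm j'}$ with the $k_i^\pm(z)$-eigenvalues on $v_{j_\ell}$ from Example \ref{eg:vector-rep}, producing all summands with $r\in(a_2,a_3]$ and $r\le\ell-1$; the extra $p=\ell$ factor $\psi_{-s_{i+1}}(q^{\mu_\s(i)}\xi_\ell/z)$ in the product appears precisely when $j_\ell=i+1$ (the only case with $a_3=\ell$), with the normal ordering preserved because $x_i^+(z)_{\ge 0}$ pairs with $k_i^+(z)$ to give $\psi^+$ and $x_i^+(z)_{<0}$ pairs with $k_i^-(z)$ to give $\psi^-$. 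The term $1\otimes x_i^+(z)$ contributes the $r=\ell$ summand when $j_\ell=i+1$, with sign $(-1)^{\iota_\s(i,\ell;\bm j)}=(-1)^{|i||\bm j'|}$ coming from super-commuting $x_i^+(z)$ past $v_{\bm j'}$. Summing yields the claimed formula. The formula for $x_i^-(z)$ follows by an entirely parallel argument using Proposition \ref{prop:copro}(2), and the formula for $k_i^\pm(z)$ is immediate from Proposition \ref{prop:copro}(3) and Example \ref{eg:vector-rep}.

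The main obstacle is to verify that the ``mod'' ideals in Proposition \ref{prop:copro} contribute zero when evaluated on $v_{\bm j}$ with $\bm j$ non-decreasing. These ideals have the form $\mathscr U\mathscr X_+^a\otimes\mathscr U\mathscr X_-^b$ with $a+b\ge 2$. The key observation is that in the vector representation one has $\mathscr X_-v_{j_\ell}\subseteq\C v_{j_\ell+1}$ (zero if $j_\ell=\kappa$) and $\mathscr X_+v_j\subseteq\C v_{j-1}$, so the mod term's image has its last-slot vector in the $\mathscr U$-orbit of $v_{j_\ell+1}$ and its first-tensor-factor in the image of at least two index-lowering $\mathscr X_+$'s on $v_{\bm j'}$. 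A weight and parity analysis---crucially using $j_{\ell-1}\le j_\ell$, so that the index configurations in $\bm j'$ required to support a double-lowering are constrained, and tracking how the subsequent $\mathscr U$-action can re-assemble into any target $v_{\bm j_r^\pm}$---shows that the coefficient of each target vanishes, completing the induction.
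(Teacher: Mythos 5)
Your proof is correct and follows essentially the same route as the paper: both reduce the claim to Proposition \ref{prop:copro} by splitting off one tensor factor at a time (the paper peels the first factor and iterates, you peel the last and induct, an immaterial difference) and both dispose of the terms from the ideals $\mathscr U\mathscr X_+^a\otimes\mathscr U\mathscr X_-^b$ by a weight comparison that uses the non-decreasing hypothesis on $\bm j$, at a comparable level of detail. Your explicit generating-series repackaging of Proposition \ref{prop:copro}(1) and the matching of $x_i^+(z)_{\gge 0}$ with $k_i^+(z)$ and $x_i^+(z)_{<0}$ with $k_i^-(z)$ correctly accounts for the normal ordering $:[\cdot]^\pm:$, and reading the coefficient as $(-1)^{\iota_\s(i,r;\bm j)}$ is the intended meaning of the statement.
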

\begin{proof}
Since $\bm j$ is non-decreasing, the statement follows from Proposition \ref{prop:copro}. We only show it for the last equality. The first and second equalities are similar.

Consider the coproduct $\Delta(k_i^\pm(z))$ where the first factor acts on the first factor of $\mathscr V_{\bm s}(\bm \xi)$ while the second factors acts on the tensor product of the rest factors of $\mathscr V_{\bm s}(\bm \xi)$. Comparing the classical weights, it is clear that terms from $\sum_{\alpha\in \mc Q_\s^+\setminus\{0\}}(\mathscr U)_{\alpha}\otimes(\mathscr U)_{-\alpha}$ acting on $v_{\bm j}$ do not contribute to the final result of $k_i^\pm(z)v_{\bm j}$ as $\bm j$ is non-decreasing. Therefore, we have
\[
k_i^\pm(z)v_{\bm j}=k_i^\pm(z)v_{j_1}\otimes k_i^\pm(z)(v_{j_2}\otimes \cdots\otimes v_{j_\ell}).
\]As the subsequence $(j_2,\dots,j_\ell)$ is also non-decreasing, we can repeat the procedure and obtain the last equality by formulas in Example \ref{eg:vector-rep}.
\end{proof}

Note that, in general, the proposition does not hold if $\bm j$ is not non-decreasing.

\subsection{Quantum toroidal superalgebras}
We recall the definition of quantum toroidal superalgebra from \cite{BM19}.
Fix $d\in \mathbb{C}^\times$ and set
\beq\label{eq:para}
q_1=d\,q^{-1},\quad q_2=q^2,\quad q_3=d^{-1}q^{-1}.
\eeq

Note that $q_1q_2q_3=1$. We always assume that $q_1,q_2$ are generic, namely $q_1^{n_1}q_2^{n_2}q_3^{n_3}=1$, $n_1, n_2,n_3\in \mathbb{Z}$, if and only if  $n_1=n_2=n_3$. Also fix $d^{1/2}, q^{1/2}\in \mathbb{C}^\times$ such that $(d^{1/2})^2=d,\; (q^{1/2})^2=q$.

Define the matrix ${M}^{\s}=({m}^{\s}_{i,j})_{i,j\in \hat{I}}$ by $m^{\s}_{i+1,i}=-m^{\s}_{i,i+1}=s_{i+1}$, and $m^{\s}_{i,j}=0$, $i\neq j\pm 1$. Recall the affine Cartan matrix $\hat A^\s=(a_{i,j}^\s)_{i,j,\in \hat I}$ from \eqref{aff cartan}.

\begin{dfn}[\cite{BM19}]{\label{defE}}
	The \textit{quantum toroidal algebra associated with $\gl_{m|n}$} and parity sequence $\s$ is the unital associative superalgebra $\Es=\Es(q_1,q_2,q_3)$  generated by $E_{i,r},F_{i,r},H_{i,r}$, 
	and invertible elements $K_i$, $C$, where $i\in \hat{I}$, $r\in \Z'$, subject to the defining relations \eqref{relCK}-\eqref{Serre6} below.
	The parity of the generators is given by $|E_{i,r}|=|F_{i,r}|=|i|=(1-s_is_{i+1})/2$, and all remaining generators have parity $0$. We use generating series
\begin{align*}
	&E_i(z)=\sum_{r\in\Z}E_{i,r}z^{-r}, \qquad   \qquad \qquad\qquad           \qquad \qquad                               
	F_i(z)=\sum_{r\in\Z}F_{i,r}z^{-r}, \\                                         
	&K^{\pm}_i(z)=K_i^{\pm1}\exp\Bigl(\pm(q-q^{-1})\sum_{r>0}H_{i,\pm r}z^{\mp r}\Bigr)=K_i^{\pm 1}+\sum_{r\gge 1}K_{i,\pm r}^\pm z^{\mp r}. 
\end{align*}
Then the defining relations are as follows. 
\medskip

\noindent{\bf $C,K$ relations}
\begin{align}{\label{relCK}}
	&\text{$C$ is central}, 
	  &   & K_iK_j=K_jK_i,                             
	  &   & K_iE_j(z)K_i^{-1}=q^{a_{i,j}^{\s}}E_j(z),  
	  &   & K_iF_j(z)K_i^{-1}=q^{-a_{i,j}^{\s}}F_j(z). 
\end{align}
	
\noindent{\bf $K$-$K$, $K$-$E$ and $K$-$F$ relations}
\begin{align}
	  & K^\pm_i(z)K^\pm_j (w) = K^\pm_j(w)K^\pm_i (z),                                                                                              
	\label{KK1}\\
	  & \frac{d^{m_{i,j}^{\s}}C^{-1}z-q^{a_{i,j}^{\s}}w}{d^{m_{i,j}^{\s}}Cz-q^{a_{i,j}^{\s}}w}                                                      
	K^-_i(z)K^+_j (w) 
	=
	\frac{d^{m_{i,j}^{\s}}q^{a_{i,j}^{\s}}C^{-1}z-w}{d^{m_{i,j}^{\s}}q^{a_{i,j}^{\s}}Cz-w}
	K^+_j(w)K^-_i (z),
	\label{KK2}\\
	  & (d^{m_{i,j}^{\s}}z-q^{a_{i,j}^{\s}}w)K_i^\pm(C^{-(1\pm1)/2}z)E_j(w)=(d^{m_{i,j}^{\s}}q^{a_{i,j}^{\s}}z-w)E_j(w)K_i^\pm(C^{-(1\pm1) /2}z),   
	\label{KE}\\
	  & (d^{m_{i,j}^{\s}}z-q^{-a_{i,j}^{\s}}w)K_i^\pm(C^{-(1\mp1)/2}z)F_j(w)=(d^{m_{i,j}^{\s}}q^{-a_{i,j}^{\s}}z-w)F_j(w)K_i^\pm(C^{-(1\mp1) /2}z). 
	\label{KF}
\end{align}
\noindent{\bf $E$-$F$ relations}
\begin{align}\label{EF}
	  & [E_i(z),F_j(w)]=\frac{\delta_{i,j}}{q-q^{-1}} 
	(\delta\left(C\frac{w}{z}\right)K_i^+(w)
	-\delta\left(C\frac{z}{w}\right)K_i^-(z)).
\end{align}	
\noindent{\bf $E$-$E$ and $F$-$F$ relations}
\begin{align}
	  & [E_i(z),E_j(w)]=0\,, \quad [F_i(z),F_j(w)]=0\, \quad                                                                     & (a_{i,j}^{\s}=0),   \label{EE FF} 
	\\
	  & (d^{m_{i,j}^{\s}}z-q^{a_{i,j}^{\s}}w)E_i(z)E_j(w)=(-1)^{|i||j|}(d^{m_{i,j}^{\s}}q^{a_{i,j}^{\s}}z-w)E_j(w)E_i(z) \quad   & (a_{i,j}^{\s}\neq0),              
	\\
	  & (d^{m_{i,j}^{\s}}z-q^{-a_{i,j}^{\s}}w)F_i(z)F_j(w)=(-1)^{|i||j|}(d^{m_{i,j}^{\s}}q^{-a_{i,j}^{\s}}z-w)F_j(w)F_i(z) \quad & (a_{i,j}^{\s}\neq0).              
\end{align}
	
\noindent{\bf Serre relations}
\begin{align}
	  & \Sym_{{z_1,z_2}}\lb{E_i(z_1),\lb{E_i(z_2),E_{i\pm1}(w)}}=0\,\quad & (a^{\s}_{i,i}\neq 0),\label{Serre1} \\ 
	  & \Sym_{{z_1,z_2}}\lb{F_i(z_1),\lb{F_i(z_2),F_{i\pm1}(w)}}=0\,\quad & (a^{\s}_{i,i}\neq 0),               
	\label{Serre2}
\end{align}
	
If $mn\neq 2$,
\begin{align}
	  & \Sym_{{z_1,z_2}}\lb{E_i(z_1),\lb{E_{i+1}(w_1),\lb{E_i(z_2),E_{i-1}(w_2)}}}=0\,\quad & (a^{\s}_{i,i}= 0),\label{Serre3} 
	\\
	  & \Sym_{{z_1,z_2}}\lb{F_i(z_1),\lb{F_{i+1}(w_1),\lb{F_i(z_2),F_{i-1}(w_2)}}}=0\,\quad & (a^{\s}_{i,i}= 0).\label{Serre4} 
\end{align}
	
If $mn=2$,
\begin{align}
	\label{Serre5} & \Sym_{{z_1,z_2}}\Sym_{{w_1,w_2}}\lb{E_{i-1}(z_1),\lb{E_{i+1}(w_1),\lb{E_{{i-1}}(z_2),\lb{E_{i+1}(w_2),E_{i}(y)}}}}=    \\ \notag
	  & =\Sym_{{z_1,z_2}}\Sym_{{w_1,w_2}}\lb{E_{i+1}(w_1),\lb{E_{i-1}(z_1),\lb{E_{{i+1}}(w_2),\lb{E_{i-1}(z_2),E_{i}(y)}}}}\, &   & (a^{\s}_{i,i}\neq  0), \\
	\label{Serre6} & \Sym_{{z_1,z_2}}\Sym_{{w_1,w_2}}\lb{F_{i-1}(z_1),\lb{F_{i+1}(w_1),\lb{F_{{i-1}}(z_2),\lb{F_{i+1}(w_2),F_{i}(y)}}}}=    \\ \notag
	  & =\Sym_{{z_1,z_2}}\Sym_{{w_1,w_2}}\lb{F_{i+1}(w_1),\lb{F_{i-1}(z_1),\lb{F_{{i+1}}(w_2),\lb{F_{i-1}(z_2),F_{i}(y)}}}}\, &   & (a^{\s}_{i,i}\neq  0). \qedd
\end{align}
\end{dfn}
Note that the element $K_0K_1\cdots K_{\ka-1}$ is central and $\Es$ for different $\s\in S_{m|n}$ are isomorphic, see \cite{BM19}. 

We use the abbreviation $E_i:=E_{i,0}$ and $F_i:=F_{i,0}$ for $i\in \hat I$.

\subsection{Basics about quantum toroidal superalgebras}
Let $\bm s\in S_{m|n}$. Define the {\it vertical homomorphism} of superalgebras $\mathsf v_{\bm s}:\uqhsls\to \mathscr E_{\bm s}$ by
\[
\sfv_{\bm s}(x_i^+ (z))=E_i(d^{-\mu_{\bm s}(i)}z),\qquad \ \sfv_{\bm s}(x_i^- (z))=F_i(d^{-\mu_{\bm s}(i)}z),
 \]
\[
\sfv_{\bm s}(k_i^\pm (z))=K_i^{\pm}(d^{-\mu_{\bm s}(i)}z),\quad\ \ \  \sfv_{\bm s}(c)=C,\qquad\quad \    (i\in I).
\]
The map $\sfv_{\bm s}$ is injective for generic parameters. We call the image of $\sfv_{\bm s}$ the {\it vertical subalgebra} of $\mathscr E_{\s}$ and denote it by $\mathscr U_q^\sfv(\widehat{\fksl}_\s)$.

We  have an injective (for generic parameters) {\it horizontal homomorphism} of superalgebras $\sfh_{\bm s}:\uqhsls\to \mathscr E_\s$ given by
\[
e_i\mapsto E_{i},\quad f_i\mapsto F_{i},\quad t_i\mapsto K_i,\qquad (i\in \hat I).
\]
We call the image of $\sfh_{\s}$ the {\it horizontal subalgebra} of $\mathscr E_\s$ and denote it by $\mathscr U_q^\sfh(\widehat{\fksl}_\s)$.

Note that $\mathscr E_\s$ is generated by $\mathscr U_q^\sfv(\widehat{\fksl}_\s)$ and $\mathscr U_q^\sfh(\widehat{\fksl}_\s)$.

\medskip

For any $u\in \C^\times$, denote by $\gamma_{u,\s}:\mathscr E_\s\to \mathscr E_\s$ the {\it shift automorphism} by $u$ defined by
\[
\gamma_{u,\bm s}(C)=C,\qquad \gamma_{u,\bm s}(A_i(z))=A_i(uz),\qquad (i\in \hat I,\ A=K^\pm,E,F).
\]

Define a map $\tau:S_{m|n}\to S_{m|n}$ which sends $\s=(s_1,\dots,s_{\ka})$ to $\tau\s:=(s_\ka,s_1,\dots,s_{\ka-1})$. There exists an isomorphism of superalgebras $\widehat{\tau}_\s:\Es\to \mathscr E_{\tau \s}$ given by 
\beq\label{eq:tau}
\widehat{\tau}_\s(C)=C,\qquad \widehat{\tau}_\s(A_i(z))=A_{i+1}(q_1^{-s_\ka}z),\qquad (i\in \hat I,\ A=K^\pm,E,F).
\eeq

\medskip

An $\Es$-module $M$ has {\it trivial central charge} if the restrictions of $M$ to $\mathscr U_q^\sfv(\widehat{\fksl}_\s)$ and $\mathscr U_q^\sfh(\widehat{\fksl}_\s)$ also have trivial central charge. Namely, $C=1$ and $K_0K_1\cdots K_{\ka-1}=1$.

We say that a $\uqsls$-module is {\it of level $\ell$} if all its irreducible components are isomorphic some submodules of $\mathscr V_\s^{\otimes \ell}$. A $\uqhsls$-module or an $\Es$-module is said to be {\it of level $\ell$} if it is of level $\ell$ as a $\uqsls$-module.

Set $\mathcal P_{\ell}:=\{\bla=(\la_1,\dots,\la_\ka)\in\Z_{\gge 0}^{\kappa}~|~\la_1+\la_2+\dots+\la_\ka=\ell\}$. We call $\bla\in\mathcal P_{\ell}$ a {\it polynomial weight}.

An $\Es$-module 
 $M$ with trivial central charge and of level $\ell$ is {\it integrable} if $M$ is integrable as modules over $\mathscr U_q^\sfv(\widehat{\fksl}_\s)$ and $\mathscr U_q^\sfh(\widehat{\fksl}_\s)$, and
\[
M=\bigoplus_{\bla\in \mathcal P_\ell}M_{\bla},\qquad M_{\bla}=\{v\in M\ |\ K_iv=q^{s_i\la_i-s_{i+1}\la_{i+1}}v,\ i\in \hat I\}.
\]

\section{Super Schur-Weyl duality}
Since $\Es$ are all isomorphic for different $\s\in S_{m|n}$, in the rest of this paper, we shall set $\s$ to be the standard parity sequence or the images of the standard parity sequence under repeated application of $\tau$ for simplicity. However, our computations work for all parity sequences.

\subsection{Super Schur-Weyl duality for finite and affine cases}
We start with the super Schur-Weyl duality for finite case established in \cite{Moo03,Mit06}, cf. \cite{Jim86}.

Recall the vector representation $\Vs$ of $\uqsls$ and consider the linear map $\mathcal T:\Vs\otimes \Vs\to \Vs\otimes\Vs$ given by
\[
\mathcal T(v_i\otimes v_j)=\begin{cases}
s_iq^{1+s_i}v_i\otimes v_i, & \text{ if }i=j,\\
(-1)^{|v_i||v_j|}qv_j\otimes v_i, & \text{ if }i<j,\\
(-1)^{|v_i||v_j|}qv_j\otimes v_i+(q^2-1)v_i\otimes v_j, & \text{ if }i>j.	
\end{cases}
\]
Fix $\ell>1$. Let $\mathcal T_i\in \End(\mathscr V_\s^{\otimes \ell})$ be the map which acts on the $i$-th and $(i+1)$-st factors as $\mathcal T$, and the other factors as the identity.

Note that our choices of coproduct and $\mathcal T$ follow that of \cite{CP96,VV96} which are slightly different from that of \cite{Moo03,Mit06}.

\begin{thm}[\cite{Jim86,Moo03,Mit06}]\label{thm:finite}
There is a left $\mathbb H_\ell$-module structure on $\mathscr V_s^{\otimes \ell}$ such that $T_i$ acts as $\mathcal T_i$ for all $1\lle i<\ell$. Moreover, the action of $\mathbb H_{\ell}$ commutes with the action of $\uqsls$ on $\mathscr V_s^{\otimes \ell}$.

Let $M$ be a right $\ha$-module. Define $\mathcal J(M):=M\otimes_{\ha}\Vsl$ with the $\uqsls$-module structure induced by that on $\Vsl$. If $\ell< mn+\ka$, then the functor $\mc J:M\to \mc J(M)$ is an equivalence from the category of finite-dimensional $\ha$-modules to the category of finite-dimensional $\uqsls$-modules of level $\ell$. \qed
\end{thm}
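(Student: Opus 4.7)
The plan is to verify the three assertions in turn: first the Hecke relations for the $\mathcal T_i$; second commutation with $\uqsls$; third the categorical equivalence via a super double centralizer. For the first assertion I would check the defining relations of $\mathbb H_\ell$ directly on basis vectors $v_i\otimes v_j$. The quadratic relation $(\mathcal T+1)(\mathcal T-q^2)=0$ splits by the three cases $i<j$, $i=j$, $i>j$; the coefficient $s_iq^{1+s_i}$ in the diagonal case is tuned precisely so that both even and odd diagonal vectors obey the quadratic relation. Distant commutation is immediate since $\mathcal T_i,\mathcal T_j$ act on disjoint factors for $|i-j|>1$. The braid relation reduces to a computation on $\Vs^{\otimes 3}$, which I would dispatch by case analysis on the relative order and multiplicities of the three indices, essentially the super Yang-Baxter identity for $\mathcal T$.

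For the second assertion, $\mathcal T$ is (up to a scalar) the $R$-matrix of $\Vs\otimes\Vs$, so it commutes with the image of $\Delta$; I would check this directly on the Chevalley generators $e_i,f_i,t_i$ using \eqref{eq:coproduct} and \eqref{eq:finite-action}, with commutation for the full $\ell$-fold action following by coassociativity. Parts one and two together then endow $\Vsl$ with a $(\uqsls,\mathbb H_\ell)$-bimodule structure and produce a representation $\Phi:\mathbb H_\ell\to\mathrm{End}_{\uqsls}(\Vsl)$.

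For the third assertion, under $\ell<mn+\ka$ the $\uqsls$-isotypic decomposition of $\Vsl$ is indexed by $(m,n)$-hook partitions $\lambda$ of $\ell$, each carrying a nonzero Hecke-side multiplicity space isomorphic to the corresponding irreducible $\mathbb H_\ell$-module $S_\lambda$ at generic $q$. This yields a bimodule decomposition
\begin{equation*}
\Vsl \;\cong\; \bigoplus_\lambda L_\lambda \boxtimes S_\lambda,
\end{equation*}
from which $\mc J$ acquires the quasi-inverse $N\mapsto\mathrm{Hom}_{\uqsls}(\Vsl,N)$ by a standard Morita argument: essential surjectivity holds because every finite-dimensional level-$\ell$ module is a sum of $L_\lambda$'s, and full faithfulness follows from $\mathrm{Hom}_{\uqsls}(L_\lambda,L_\mu)=\delta_{\lambda,\mu}\mathbb C$ together with semisimplicity of $\mathbb H_\ell$ at generic $q$. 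The main obstacle is surjectivity of $\Phi$: because the full category of finite-dimensional $\uqsls$-modules is not semisimple in the super setting, the classical Jimbo argument does not transplant verbatim. The bound $\ell<mn+\ka$ is precisely what rules out atypical hook shapes and restricts attention to a semisimple subcategory, at which point character- or highest-weight-based arguments in the spirit of \cite{Moo03,Mit06} supply the required surjectivity.
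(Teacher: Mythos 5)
The paper offers no proof of this statement: it is imported verbatim from \cite{Jim86,Moo03,Mit06} (note the \qed directly after the statement), so there is nothing internal to compare your argument against. Your outline is the standard double-centralizer reconstruction and is essentially sound: the case-by-case verification of the quadratic relation (the eigenvalue $s_iq^{1+s_i}$ being $q^2$ or $-1$ according to parity), the braid relation via the super Yang--Baxter identity, commutation with $\Delta(\uqsls)$ via the $R$-matrix, and the bimodule decomposition $\Vsl\cong\bigoplus_\lambda L_\lambda\boxtimes S_\lambda$ over $(m,n)$-hook partitions are all the right ingredients, and they match the cited sources. One point deserves correction: the bound $\ell<mn+\ka$ does not ``rule out atypical hook shapes'' or create semisimplicity --- the level-$\ell$ (polynomial) subcategory is semisimple for every $\ell$, as the paper itself records in Remark \ref{rem:integrable}. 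The bound is there because $(m+1)(n+1)=mn+\ka+1$, so it forces every partition of $\ell$ to be an $(m,n)$-hook partition; this is what makes $\Phi:\mathbb H_\ell\to\mathrm{End}_{\uqsls}(\Vsl)$ \emph{injective}, hence guarantees that no irreducible $\ha$-module $S_\lambda$ is killed by $\mc J$ (one has $\mc J(S_\lambda)=0$ exactly when $\lambda$ is not a hook partition), which is what faithfulness of the functor requires. Surjectivity of $\Phi$, which you single out as the main obstacle, in fact holds without any bound on $\ell$ and is supplied by the double-centralizer theorems of \cite{Moo03,Mit06}; so you have the roles of the two halves of the duality slightly transposed, but the architecture of the argument is correct.
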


The statement has been extended to the quantum affine superalgebra $\uqhsls$ in \cite{Fli18,KL20}, cf. \cite{GRV94,CP96}. 

We identify $\aha$ with $\aha^{(1)}$. Recall \eqref{eq:theta-oper1}, \eqref{eq:theta-oper2}, and the generators $Y_j$ in $\aha^{(1)}$.

\begin{thm}[\cite{GRV94,CP96,Fli18,KL20}]\label{thm:affine}
There exists a functor $\mc F$ from the category of finite-dimensional	right $\aha$-modules to the category of finite-dimensional $\uqhsls$-modules with trivial central charge and of level $\ell$, defined as follows. If $M$ is a right $\aha$-module, then $\mc F(M)=\mc J(M)$ as a $\uqsls$-module and the action of $e_0$, $f_0$, $t_0$ is given by
\beq\label{eq:v-0-action11}
e_0(w\otimes \bm v)=\sum_{j=1}^\ell wY_j^{-1}\otimes \sff_{\theta,j}\bm v,\quad f_0(w\otimes \bm v)=s_{\ka}\sum_{j=1}^\ell wY_j\otimes \sfe_{\theta,j}\bm v,\quad t_0(w\otimes \bm v)=w\otimes (\sfk_\theta^{-1})^{\otimes \ell}\bm v, 
\eeq
where $w\in M$ and $\bm v\in \Vsl$. Moreover, if $\ell<\ka$, then the functor $\mc F$ is an equivalence of categories.\qed
\end{thm}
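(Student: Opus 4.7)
The plan is to extend Theorem \ref{thm:finite} by defining the $e_0, f_0, t_0$ action on $\mc J(M)$ via \eqref{eq:v-0-action11}, checking the Drinfeld--Jimbo relations, and then producing an inverse functor when $\ell<\ka$. The bridge between $\aha$ and $\uqhsls$ is Example \ref{eg:vector-rep}: the module $\Vs(\bm\xi) = \C[\xi_1^{\pm 1},\dots,\xi_\ell^{\pm 1}] \otimes \Vsl$ carries a $\uqhsls$-action in which the $\xi_j$'s play the role of inverse spectral parameters. In parallel with \cite{GRV94,CP96}, one promotes the $\ha$-action of Theorem \ref{thm:finite} to a right $\aha$-action on $\Vs(\bm\xi)$ commuting with the $\uqhsls$-action, in which $Y_j$ acts essentially as multiplication by $\xi_j^{-1}$; then $\mc F(M) := M \otimes_{\aha} \Vs(\bm\xi)$ carries an induced $\uqhsls$-action, and the identification $Y_j\leftrightarrow \xi_j^{-1}$ gives an isomorphism $\mc F(M) \cong M \otimes_{\ha} \Vsl = \mc J(M)$ of $\uqsls$-modules under which \eqref{eq:z0-oper} becomes precisely \eqref{eq:v-0-action11}.

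For the verification, first check that \eqref{eq:v-0-action11} descends from $M \otimes_\C \Vsl$ to $\mc J(M)$: the operator $(\sfk_\theta^{-1})^{\otimes \ell}$ commutes with $\mc T_i$, and for $e_0, f_0$ the Bernstein-type relation $T_i Y_i T_i = q^2 Y_{i+1}$ together with $T_i Y_j T_i^{-1} = Y_j$ for $j \ne i, i+1$ matches a local check on the pairs $(\sff_{\theta,i}, \sff_{\theta, i+1})$ and $(\sfe_{\theta,i}, \sfe_{\theta, i+1})$ against $\mc T_i$. For the Drinfeld--Jimbo relations themselves, those internal to $\uqsls$ come from Theorem \ref{thm:finite}; the $t_0$-weight relations and the vanishing of $[e_0, f_i]$ and $[e_i, f_0]$ for $i \in I \setminus \{1, \ka-1\}$ are immediate from the supports of $\sff_{\theta,j}, \sfe_{\theta,j}$; and the Cartan-type relation $[e_0, f_0] = (t_0-t_0^{-1})/(q-q^{-1})$ follows from $Y_i Y_j = Y_j Y_i$ by a direct computation. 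The Serre relations at index $0$ are handled through the isomorphism $\mc F(M) \cong M\otimes_{\aha}\Vs(\bm\xi)$, on which the Serre relations hold automatically from the $\uqhsls$-structure of Example \ref{eg:vector-rep}, with Proposition \ref{prop:tensor-vect-rep} giving the explicit cross-check on non-decreasing weight vectors. Triviality of the central charge and the level-$\ell$ property are immediate from the construction.

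For the equivalence when $\ell < \ka$, note that $\ka \le mn + \ka$, so the hypothesis of Theorem \ref{thm:finite} holds and $\mc J$ is already an equivalence; define the inverse $\mc G$ by taking $\mc G(V)$ to be the $\ha$-module corresponding to $V|_{\uqsls}$, equipped with the $Y_j$-action determined by inverting \eqref{eq:v-0-action11}: under the identification $V \cong M \otimes_{\ha} \Vsl$, writing $e_0(w \otimes \bm v) = \sum_j w_j \otimes \sff_{\theta,j} \bm v$ and setting $w Y_j^{-1} := w_j$, where the condition $\ell < \ka$ ensures the vectors $\sff_{\theta,j} \bm v$ are linearly independent for suitable $\bm v$ (e.g.\ $\bm v = v_1^{\otimes \ell}$), so that the $w_j$'s are unambiguously extracted. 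The $\aha$-relations on $\mc G(V)$ then reduce to the $\uqhsls$-relations on $V$ via the same local calculations used in the forward direction, yielding that $\mc G$ is inverse to $\mc F$. The main technical hurdle throughout is the Serre relations at index $0$: in the even case \cite[Theorem 3.3]{VV96}, these were handled via a braid group action on $\uqhsls$ and its integrable representations, a tool that does not directly transpose to the super setting, as noted in the introduction; the replacement via Propositions \ref{prop:copro} and \ref{prop:tensor-vect-rep} requires careful bookkeeping of the parity signs $\iota_{\bm s}(i,r;\bm j)$ and is the source of essentially all the new super-specific technical content.
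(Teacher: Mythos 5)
First, note that the paper does not actually prove Theorem \ref{thm:affine}: it is imported verbatim from the literature (\cite{Fli18,KL20}, cf.\ \cite{GRV94,CP96}) and stated with a \emph{qed} box, so there is no internal argument to compare yours against line by line. Your overall strategy --- define the degree-zero affine generators by \eqref{eq:v-0-action11}, check the Drinfeld--Jimbo relations via the evaluation-module picture of Example \ref{eg:vector-rep} and Proposition \ref{prop:tensor-vect-rep}, and invert when $\ell<\ka$ --- is the right general template and matches what the cited references do. However, two concrete steps in your sketch would fail as written.

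First, the claimed right $\aha$-module structure on $\Vs(\bm\xi)$ in which $T_i$ acts as $\mathcal T_i$ on the vector factors and $Y_j$ acts ``essentially as multiplication by $\xi_j^{-1}$'' does not exist in that naive form: since $\mathcal T_i$ does not touch the polynomial part, conjugation by $T_i$ fixes $Y_j$ for every $j$, and the Bernstein relation $T_i^{-1}Y_iT_i^{-1}=q^{-2}Y_{i+1}$ (note also that you wrote it with the wrong orientation, $T_iY_iT_i=q^2Y_{i+1}$, which is the $X$-relation, not the $Y$-relation of Definition \ref{def:DAHA}) cannot hold. One needs a Demazure--Lusztig-type correction of the $T_i$-action on $\C[\xi_1^{\pm1},\dots,\xi_\ell^{\pm1}]$ as in \cite[Section 4]{GRV94}, and the introduction of this very paper warns that this modification is exactly the delicate point in the super setting. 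The safe route is the one the paper implicitly endorses: define the action directly on $M\otimes_{\ha}\Vsl$ by \eqref{eq:v-0-action11}, check well-definedness against the $T_i$-relations by hand (the four-case analysis as in the lemma on $\Psi_\s$), and verify the affine Serre relations using Propositions \ref{prop:copro} and \ref{prop:tensor-vect-rep} on non-decreasing $v_{\bm j}$. Second, your inverse functor extracts the elements $w_j=wY_j^{-1}$ from $e_0(w\otimes\bm v)=\sum_j w_j\otimes\sff_{\theta,j}\bm v$ with $\bm v=v_1^{\otimes\ell}$; this cannot work, because the vectors $\sff_{\theta,j}v_1^{\otimes\ell}$ are scalar multiples of $v_1^{\otimes j-1}\otimes v_\ka\otimes v_1^{\otimes\ell-j}$, which all lie in a single $\fkS_\ell$-orbit, so in the quotient $M\otimes_{\ha}\Vsl$ the sum can be rewritten with a single tensor factor on the right and the individual $w_j$ are \emph{not} determined. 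You must instead test against vectors with pairwise distinct tensor components (this is Lemma \ref{lem:inj}(2) and the choice $\bm v=v_1\otimes\cdots\otimes v_i\otimes v_{i+2}\otimes\cdots$ made in the paper's proof of the toroidal equivalence), which is also where the numerical bound $\ell<\ka$ genuinely enters --- not merely through $\ka\lle mn+\ka$ as you suggest.
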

Note that the $\uqhsls$-module $\mc F(M)$ can be understood as the tensor product of evaluation vector representations $\Vs(\bm Y)$, where $\bm Y=(Y_1^{-1},\dots,Y_\ell^{-1})$, with values in $M$, see Example \ref{eg:vector-rep} and Proposition \ref{prop:tensor-vect-rep}.

\subsection{Super Schur-Weyl duality for toroidal case}
Our main result is the Schur-Weyl duality between double affine Hecke algebra $\daha$ and the quantum toroidal superalgebra $\Es$, extending the main result of \cite{VV96} to the supersymmetric case.

Recall that $E_{i}$, $F_{i}$, $K_{i}$, $i\in \hat I$, are Chevalley generators of the horizontal subalgebra $\uhs$. It is also convenient to introduce extra generators $\sfE_0$, $\sfF_0$, $\sfK_0$ of $\Es$ so that combining with $E_{i}$, $F_{i}$, $K_{i}$, $i\in I$, they form Chevalley generators of the vertical subalgebra $\uvs$. Note that $E_{i}$, $F_{i}$, $K_{i}$, $i\in I$, are Chevalley generators of $\uqsls$.

Let $M$ be a right $\daha$-module. From Theorem \ref{thm:affine}, $\mc F(M)$ is a $\uvs$-module such that
\[
\sfE_0(w\otimes \bm v)=\sum_{j=1}^\ell wY_j^{-1}\otimes \sff_{\theta,j}\bm v,\quad\sfF_0(w\otimes \bm v)=s_{\ka}\sum_{j=1}^\ell wY_j\otimes \sfe_{\theta,j}\bm v,\quad\sfK_0(w\otimes \bm v)=w\otimes (\sfk_\theta^{-1})^{\otimes \ell}\bm v, 
\]
where the action of $A_{i}$,  for $i\in I$ and $A=E,F,K$, is as in Theorem \ref{thm:finite}. 

\medskip

Recall $\zeta$ from Definition \ref{def:DAHA} and $q_1=dq^{-1}$ from \eqref{eq:para}. Our main result is the toroidal super Schur-Weyl duality. 

\begin{thm}\label{thm:toroidal}
If $\zeta=q_1^{n-m}$ and $\ka\gge 4$, then there exists a functor $\scrF$ from the category of right $\daha$-modules to the category of integrable $\Es$-modules with trivial central charge and of level $\ell$, defined as follows. If $M$ is a right $\daha$-module, then $\scrF(M)=\mc F(M)$ as a $\uvs$-module and the action of $E_{0}$, $F_{0}$, $K_{0}$ is given by
\beq\label{eq:zero-act1}
E_{0}(w\otimes \bm v)=\sum_{j=1}^\ell wX_j\otimes \sff_{\theta,j}\bm v,\quad F_{0}(w\otimes \bm v)=s_{\ka}\sum_{j=1}^\ell wX_j^{-1}\otimes \sfe_{\theta,j}\bm v,\quad K_{0}(w\otimes \bm v)=w\otimes (\sfk_\theta^{-1})^{\otimes \ell}\bm v, 
\eeq
where $w\in M$ and $\bm v\in \Vsl$. Moreover, if $\ell<\ka-2$, then the functor $\scrF$ is an equivalence of categories.
\end{thm}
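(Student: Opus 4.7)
The plan is to show that \eqref{eq:zero-act1}, combined with the $\uvs$-action \eqref{eq:v-0-action1}, defines an $\Es$-module structure on $\scrF(M)$, and then to construct a quasi-inverse to $\scrF$ when $\ell<\kappa-2$. The defining relations of $\Es$ split into three families: those lying inside the vertical subalgebra $\uvs$, those lying inside the horizontal subalgebra $\uhs$, and the cross-relations mixing the two. The first family holds by Theorem \ref{thm:affine} applied to $\aha^{(1)}=\langle T_i,Y_j\rangle\subset\daha$ and pulled back along $\sfv_\s$; the $d^{\pm 1}$ factors in \eqref{eq:v-0-action1} absorb the shift $d^{-\mu_{\bm s}(i)}$ in $\sfv_\s$. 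For the second family, I would check that $\aha^{(2)}=\langle T_i,X_j\rangle$ is isomorphic to $\aha$ via $Y_j\mapsto X_j^{-1}$ (all affine-Hecke relations are preserved under this substitution), and then apply Theorem \ref{thm:affine} to this second copy: the resulting Chevalley generators coincide exactly with $E_0,F_0,K_0$ in \eqref{eq:zero-act1}, giving all relations of $\uhs$.

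The heart of the proof is the third family --- the toroidal cross-relations of Definition \ref{defE} pairing $E_0,F_0,K_0$ with the vertical Drinfeld currents $E_i(z),F_i(z),K_i^\pm(z)$ for $i\in I$. Following the strategy announced in the introduction, I would bypass the braid-group argument of \cite{VV96} (which does not adapt to the super setting) and compute directly on the tensor-product model $\scrF(M)=M\otimes_{\ha}\Vsl$. Every element rewrites as a sum of $w\otimes v_{\bm j}$ with $\bm j$ non-decreasing; on such vectors Proposition \ref{prop:tensor-vect-rep} gives closed-form normal-ordered expressions for each vertical current in terms of $\xi_r\mapsto q^{\mu_{\bm s}(i)}Y_r^{-1}$ and the rational functions $\psi_{\pm s_i}$. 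The higher modes $E_{0,r},F_{0,r}$ needed to interpret the cross-relations are extracted from \eqref{eq:zero-act1} via the coproduct of Proposition \ref{prop:copro}; modulo $\mathscr U\mathscr X_\pm^2\otimes\mathscr U\mathscr X_\mp$ the non-decreasing condition on $\bm j$ annihilates most correction terms, reducing each cross-relation to a DAHA identity controlled by $T_iX_iT_i=q^2X_{i+1}$, Lemmas \ref{lem:Q}--\ref{lem:P}, and the braiding $X_0Y_1=\zeta Y_1X_0$. The normalization $\zeta=q_1^{n-m}$ is forced by matching the vertical shift $d^{-\mu_{\bm s}(0)}$ against the trivial-central-charge condition $K_0K_1\cdots K_{\kappa-1}=1$.

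For the equivalence when $\ell<\kappa-2$, the quasi-inverse $\scrG(V):=\mathrm{Hom}_{\uqsls}(\Vsl,V)$ inherits commuting right actions of $\aha^{(1)}$ and $\aha^{(2)}$ from the restrictions of $V$ to $\uvs$ and $\uhs$ via two applications of Theorem \ref{thm:affine}, and the DAHA braiding $X_0Y_1=\zeta Y_1X_0$ follows from the $\tau$-equivariance \eqref{eq:tau} together with trivial central charge. The tighter bound $\ell<\kappa-2$ (versus $\ell<\kappa$ in the affine case) reflects needing Theorem \ref{thm:finite} simultaneously for two distinct Chevalley restrictions of $V$, costing one node in each direction. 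The principal obstacle throughout is the cross-relation verification in the previous paragraph --- especially the super-Serre relations \eqref{Serre3}--\eqref{Serre6}, whose unfolding on $w\otimes v_{\bm j}$ generates a cascade of higher-mode correction terms from the iterated coproduct, and whose cancellation without the braid-group framework of \cite{VV96} is the technical crux that Propositions \ref{prop:copro}--\ref{prop:tensor-vect-rep} are designed to resolve.
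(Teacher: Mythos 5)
There is a genuine gap at the heart of your plan: you never actually define the action of the full node-$0$ currents $E_0(z),F_0(z),K_0^\pm(z)$, only their zero modes from \eqref{eq:zero-act1}, and your proposed mechanism for recovering the higher modes does not work. Proposition \ref{prop:copro} describes the coproduct of the quantum \emph{affine} superalgebra and governs how the vertical (or horizontal) currents act on tensor products; it cannot produce the toroidal currents $E_{0,r}$ for $r\neq 0$ from the single operator $E_0$ in \eqref{eq:zero-act1}, since those modes lie outside both $\uvs$ and $\uhs$. Consequently the ``cross-relations'' you intend to verify --- e.g.\ \eqref{KE} with $i=0$, or the Serre relations \eqref{Serre3}--\eqref{Serre6} involving $E_0(z)$ --- cannot even be formulated under your setup, let alone checked; and your three-way split of the relations of $\Es$ into vertical, horizontal and cross does not cover them, because relations involving full currents at the node $0$ belong to none of the three families. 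The paper's resolution is the rotation operator $\Psi_\s:M\otimes_{\ha}\Vsl\to M\otimes_{\ha}\mathscr V_{\tau\s}^{\otimes\ell}$ together with Proposition \ref{prop:main}: one \emph{defines} $E_0^\s(z):=\Psi_\s^{-1}\circ E_1^{\tau\s}(q_1^{-s_\ka}z)\circ\Psi_\s$ (and similarly for $F_0,K_0^\pm$), checks that this is compatible with \eqref{eq:zero-act1}, and then every relation involving the node $0$ is transported by a power of $\Psi$ into a relation among nodes in $I$ for a rotated parity sequence, which holds by Corollary \ref{cor:uvs-action}. The condition $\zeta=q_1^{n-m}$ is exactly what makes the wrap-around identity in Proposition \ref{prop:main} close up, so that conjugation by $\Psi_\s$ realizes the Dynkin rotation $\widehat\tau_\s$ on the module. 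This device is absent from your proposal, and without it the ``cascade of higher-mode correction terms'' you mention is not something Propositions \ref{prop:copro}--\ref{prop:tensor-vect-rep} can resolve on their own.

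Your treatment of the equivalence is also not what the paper does, and as written it is too vague to stand. Rather than a Hom-space quasi-inverse, the paper restricts an integrable $\Es$-module to $\uvs$ and to $\uhs$, applies Theorem \ref{thm:affine} twice to get two $\aha$-structures $M^{(1)},M^{(2)}$ on the same $\ha$-module $M$, and then must prove the genuinely new DAHA relations $wQY_{i-1}Q^{-1}=wY_i$ and $wQY_\ell Q^{-1}=\zeta wY_1$ from Proposition \ref{prop:presentation}. This is done by evaluating the commutation of $E_0$ with $K_i^+(z)$ (and the coefficient of $zw$ in \eqref{KE}) on vectors $w\otimes v_{\bm j}$ with pairwise distinct $j_1,\dots,j_\ell$, using Lemma \ref{lem:inj} to cancel the generator $\bm v$; the bound $\ell<\ka-2$ enters precisely because these test vectors need $\ell+1<\ka-1$ distinct indices, not because of ``two Chevalley restrictions costing one node each.'' Your appeal to ``$\tau$-equivariance together with trivial central charge'' for the braiding $X_0Y_1=\zeta Y_1X_0$ names no actual computation and would need to be replaced by an argument of this kind.
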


We shall prove the theorem in the next section. Before that, we make a few remarks which will be used later.

\begin{rem}\label{rem:integrable}
Since $q$ is not a root of unity, the $\ha$-modules and integrable $\uqsls$-modules are direct sums of finite-dimensional modules. (Note that in general the category of finite-dimensional $\uqsls$-modules	is not semisimple, however we restrict to the subcategory of polynomial modules only which is semisimple.) Therefore, if $\ell< mn+\ka$, Theorem \ref{thm:finite} implies indeed an equivalence between the category of $\ha$-modules and the category of integrable $\uqsls$-modules of level $\ell$.\qed
\end{rem}

\begin{rem}\label{rem:affine}
Similarly, if $q$ is generic and $\ell<\ka$, then Theorem \ref{thm:affine} gives an equivalence between the category of $\aha$-modules and the category of integrable $\uqhsls$-modules with trivial central charge and of level $\ell$.\qed
\end{rem}

\section{Proof of the main result}
In this section, we prove that the $\uvs$-action and the action of $E_{0},F_{0},K_{0}$ on $\mc F(M)$ extend to an $\Es$-module structure on $\mc F(M)$. Moreover, the resulting $\Es$-module $\scrF(M)$ is integrable with trivial central charge and of level $\ell$. Finally, we show that the functor $\scrF$ is an equivalence of categories if $\ell<\ka-2$.
\subsection{Explicit action of vertical subalgebra}
Clearly, any vector $w\otimes \bm v\in \scrF(M)$ can be written as $\sum_{\bm j}w_{\bm j}\otimes v_{\bm j}$ summed over non-decreasing $\bm j$, where $\bm j$ is an $\ell$-tuple of integers from $(0,\ka]$ and $w_{\bm j}\in M$. Hence it suffices for us to concentrate on $\bm v$ of the form $v_{\bm j}$ for non-decreasing $\bm j$.

We need the explicit action of Drinfeld currents of $\uvs$ on $\scrF(M)$ which follows directly from Proposition \ref{prop:tensor-vect-rep}.
\begin{cor}\label{cor:uvs-action}
If $\bm j$ is non-decreasing, we have the following in $\scrF(M)$,
\begin{align*}
&E_i(z)(w\otimes v_{\bm j})=\sum_{r=a_2+1}^{a_3}\iota_\s(i,r;\bm j)w:\Big[\delta(q_1^{\mu_{\bm s}(i)}Y_rz)\prod_{p=r+1}^{a_3}\psi_{s_{i+1}}(q_1^{\mu_{\bm s}(i)}Y_pz)\Big]^+:\otimes v_{\bm j_{r}^-},\\
&F_i(z)(w\otimes v_{\bm j})=s_i\sum_{r=a_1+1}^{a_2}\iota_\s(i,r;\bm j)	w:\Big[\delta(q_1^{\mu_{\bm s}(i)}Y_rz)\prod_{p=a_1+1}^{r-1}\psi_{-s_{i}}(q_1^{\mu_{\bm s}(i)}Y_pz)\Big]^-:\otimes v_{\bm j_{r}^+},\\
&K_i^\pm(z)(w\otimes v_{\bm j})=w\prod_{j_r=i}\psi^\pm_{-s_{i}}(q_1^{\mu_{\bm s}(i)}Y_rz)\prod_{j_r=i+1}\psi^\pm_{s_{i+1}}(q_1^{\mu_{\bm s}(i)}Y_rz)\otimes v_{\bm j},
\end{align*}
where $(a_1,a_2]=\bm j^{-1}(i)$, $(a_2,a_3]=\bm j^{-1}(i+1)$, $w\in M$ and $i\in I$. Moreover, $C$ acts by identity.
\end{cor}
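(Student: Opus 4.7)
The plan is to transport the explicit formulas of Proposition \ref{prop:tensor-vect-rep} to $\scrF(M)$ by combining three inputs: (i) the identification from Theorem \ref{thm:affine} that exhibits $\mathcal F(M)$ as the polynomial tensor representation $\Vs(\bm Y^{-1})$ with values in $M$, where $\bm Y^{-1}=(Y_1^{-1},\dots,Y_\ell^{-1})$ appears through the action of the $Y_j\in\aha^{(1)}$ on $M$; (ii) the vertical embedding $\sfv_\s:\uqhsls\to\Es$, which on currents reads $\sfv_\s(x_i^\pm(z))=E_i(d^{-\mu_\s(i)}z)$ and $\sfv_\s(k_i^\pm(z))=K_i^\pm(d^{-\mu_\s(i)}z)$, $\sfv_\s(c)=C$; and (iii) the shift $q/d=q_1^{-1}$ coming from \eqref{eq:para}. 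Since any $w\otimes\bm v\in\scrF(M)$ decomposes as $\sum_{\bm j}w_{\bm j}\otimes v_{\bm j}$ over non-decreasing $\bm j$, it suffices to evaluate on a single $w\otimes v_{\bm j}$ with $\bm j$ non-decreasing; the ordering hypothesis is exactly the one under which Proposition \ref{prop:tensor-vect-rep} applies.

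To compute $E_i(z)(w\otimes v_{\bm j})$, I would rewrite $E_i(z)=\sfv_\s(x_i^+(d^{\mu_\s(i)}z))$ and plug the first formula of Proposition \ref{prop:tensor-vect-rep} evaluated at $\xi_p=Y_p^{-1}$ and $z\mapsto d^{\mu_\s(i)}z$. This yields
\[
E_i(z)(w\otimes v_{\bm j})=\sum_{r=a_2+1}^{a_3}\iota_\s(i,r;\bm j)\,w\,:\!\Big[\delta\!\big((q/d)^{\mu_\s(i)}Y_r^{-1}/z\big)\prod_{p=r+1}^{a_3}\psi_{-s_{i+1}}\!\big((q/d)^{\mu_\s(i)}Y_p^{-1}/z\big)\Big]^+\!:\otimes v_{\bm j_r^-}.
\]
Now I would invoke two elementary formal identities that are routinely valid in this setting: the symmetry $\delta(u)=\delta(u^{-1})$ for the formal delta, and $\psi_{-r}(u^{-1})=\psi_r(u)$ for the rational functions $\psi_r(z)=(q^r-q^{-r}z)/(1-z)$ (a one-line check). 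Substituting $(q/d)^{\mu_\s(i)}=q_1^{-\mu_\s(i)}$ and applying these identities converts the displayed expression into the claimed formula for $E_i(z)$. The $F_i(z)$ formula follows by the identical procedure starting from the second line of Proposition \ref{prop:tensor-vect-rep}, noting that the $-$-normal ordering there is preserved under the same sequence of symmetries, and the $K_i^\pm(z)$ formula is immediate because its Proposition \ref{prop:tensor-vect-rep} form has no normal ordering, only rational factors $\psi_{\pm s_\bullet}(q^{\mu_\s(i)}\xi_r/z)$, which become $\psi_{\mp s_\bullet}(q_1^{\mu_\s(i)}Y_rz)$ after the same change of variables.

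Finally, $C$ acts by the identity since $\sfv_\s(c)=C$ and $c$ acts by the identity on each vector representation $\Vs(\xi_j)$ (hence on their tensor product), and this action descends through $\otimes_{\aha}$. The main bookkeeping point is to make sure the expansion conventions $:\![\cdots]^+\!:$ and $:\![\cdots]^-\!:$ transform correctly under $z\mapsto d^{\mu_\s(i)}z$ and $u\mapsto u^{-1}$; this is the only nontrivial step, and it is settled by checking that the prescribed expansion directions of $\phi^\pm$ in the definitions are preserved under the (invertible) rescaling $z\mapsto d^{\mu_\s(i)}z$ and are interchanged under $u\mapsto u^{-1}$ exactly in the way that matches the swap between $\psi_{-s_{i+1}}$ and $\psi_{s_{i+1}}$ (respectively $\psi_{s_i}$ and $\psi_{-s_i}$). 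No other subtleties arise, so the corollary follows by direct substitution into Proposition \ref{prop:tensor-vect-rep}.
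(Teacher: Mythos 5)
Your proposal is correct and takes essentially the same route as the paper, which proves the corollary in one line by comparing \eqref{eq:z0-oper} with \eqref{eq:v-0-action11} to identify $\mc F(M)$ with $\Vs(\bm Y)$, $\bm Y=(Y_1^{-1},\dots,Y_\ell^{-1})$, and then applying Proposition \ref{prop:tensor-vect-rep} together with the shifts $z\mapsto d^{\mu_\s(i)}z$ from the vertical homomorphism. Your explicit verification of the identities $\delta(u)=\delta(u^{-1})$ and $\psi_{-r}(u^{-1})=\psi_r(u)$ just spells out the bookkeeping the paper leaves implicit.
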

\begin{proof}
Comparing \eqref{eq:z0-oper} with \eqref{eq:v-0-action11} and noting the shifts in the vertical homomorphism, the statement follows from Proposition \ref{prop:tensor-vect-rep}.	
\end{proof}

\subsection{An important proposition}
We shall define the action of the series $E_0(z),F_0(z),K_0^\pm(z)$ on $\mc F(M)$ in Section \ref{sec:proof-1}. To this end, we need the following linear map and its properties.

Let $\Psi_\s:M\otimes_{\ha}\Vsl\to M\otimes_{\ha}\mathscr V_{\tau \s}^{\otimes\ell}$ be the linear map defined by
\[
\Psi_\s(w\otimes v_{\bm j})=wX_1^{-\delta_{j_1,\ka}}X_2^{-\delta_{j_2,\ka}}\cdots X_\ell^{-\delta_{j_\ell,\ka}}\otimes v_{j_1+1}\otimes v_{j_2+1}\otimes\cdots \otimes v_{j_\ell+1},
\]
for any $\ell$-tuple $\bm j=(j_1,\dots,j_\ell)$, $w\in M$. Here by convention $v_{\ka+1}=v_1$. Note that the tensor product of vector representations in the target space is for superalgebras associated to the parity sequence $\tau\bm s$. In particular, we have $|v_j|$ in $\Vs$ coincides with $|v_{j+1}|$ in $\mathscr V_{\tau \s}$, and $\Psi_{\bm s}$ is an even linear map.

\begin{lem}
The linear map $\Psi_\s$ is well-defined.	
\end{lem}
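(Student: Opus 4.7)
The plan is to verify that $\Psi_\s$ respects the $\otimes_\ha$ balancing, i.e., that
\[
\Psi_\s(wT_i\otimes v_{\bm j})=\Psi_\s(w\otimes T_iv_{\bm j})
\]
for every $1\lle i<\ell$, $w\in M$, and basis vector $v_{\bm j}$; since $\mathbb H_\ell$ is generated by the $T_i$'s and the relation is bilinear, checking this for each generator will suffice. The basic compatibility underlying the argument is that, with $\pi(j)=j+1$ for $j<\ka$ and $\pi(\ka)=1$, the parity of $v_{\pi(j)}$ in $\mathscr V_{\tau\s}$ equals the parity of $v_j$ in $\Vs$, so the parity signs appearing in $\mathcal T$ on source and target match automatically after the cyclic shift.

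I would partition the verification according to how many of $j_i,j_{i+1}$ equal $\ka$. When neither does, the shift $\pi$ preserves the order of $j_i$ and $j_{i+1}$, no $X^{-1}$ factor occurs at positions $i$ or $i+1$, and all the $X_k^{-1}$'s produced by $\Psi_\s$ have $k\ne i,i+1$ and thus commute with $T_i$ in $\daha$. The identity then follows by sliding $T_i$ across these factors and transferring it onto the vector side via the $\ha$-balance of the target. When both $j_i=j_{i+1}=\ka$, the two positions shift to $1$, $T_i$ acts on both $v_\ka\otimes v_\ka$ and $v_1\otimes v_1$ by the same scalar $s_\ka q^{1+s_\ka}$, and $\Psi_\s$ contributes $X_i^{-1}X_{i+1}^{-1}$ at these positions. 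The identity $T_iX_iX_{i+1}=X_iX_{i+1}T_i$ in $\daha$, obtained by combining $T_iX_iT_i=q^2X_{i+1}$ with $T_i^{-1}X_{i+1}T_i^{-1}=q^{-2}X_i$, inverts to $T_iX_i^{-1}X_{i+1}^{-1}=X_i^{-1}X_{i+1}^{-1}T_i$, and the scalar action closes the case.

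The delicate case is when exactly one of $j_i,j_{i+1}$ equals $\ka$, which by symmetry splits into $j_i<j_{i+1}=\ka$ and $j_i=\ka>j_{i+1}$. Here $\pi$ flips the relative order, so the branches of $\mathcal T$ used on the two sides differ. The principal tools are the $\daha$ identities
\[
T_iX_{i+1}^{-1}=q^2X_i^{-1}T_i^{-1},\qquad T_iX_i^{-1}=(q^2-1)X_i^{-1}+X_{i+1}^{-1}T_i,
\]
derived from $T_iX_iT_i=q^2X_{i+1}$ together with the quadratic relation $T_i^2=(q^2-1)T_i+q^2$. Rewriting $wT_iX_{i+1}^{-1}$ or $wT_iX_i^{-1}$ via these identities, pushing the resulting $T_i^{\pm 1}$ onto the vector side through $\otimes_\ha$, and evaluating with $\mathcal T$ in $\mathscr V_{\tau\s}^{\otimes\ell}$ reproduces $\Psi_\s(w\otimes T_iv_{\bm j})$ on the nose; the $(q^2-1)$ term in the second identity corresponds precisely to the diagonal contribution of the $>$ branch of $\mathcal T$ on the source side. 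The main obstacle is the bookkeeping in this last case, namely tracking parity signs, the flip of the $\mathcal T$ branch induced by the cyclic shift, and the positioning of the $X^{-1}$ factors simultaneously; once the above identities are in hand, each subcase closes by a short, self-contained manipulation.
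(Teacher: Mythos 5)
Your proposal is correct and follows essentially the same route as the paper: reduce well-definedness to the balance condition for each generator $T_i$, observe that parities are preserved under the cyclic shift, and split into cases according to how many of $j_i,j_{i+1}$ equal $\ka$, closing the nontrivial cases with the identities $T_iX_{i+1}^{-1}=q^2X_i^{-1}T_i^{-1}$ and $T_iX_i^{-1}=(q^2-1)X_i^{-1}+X_{i+1}^{-1}T_i$, exactly as in the paper's four-case check. The only (harmless) difference is that you keep general $\ell$ and commute $T_i$ past the $X_k^{-1}$ with $k\ne i,i+1$, whereas the paper reduces outright to $\ell=2$.
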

\begin{proof}
It reduces to show that
\[
\Psi_\s(w\otimes \mathcal T_iv_{\bm j})=\Psi_\s(wT_i\otimes v_{\bm j})
\] for all $1\lle i<\ell$. It suffices to show it for the case of $\ell=2$.

We have four situations.
\begin{enumerate}
	\item If $j_1\ne\kappa$ and $j_2\ne \kappa$, this is obvious.
	\item If $j_1=\kappa$ and $j_2\ne \kappa$, one uses 
	$$
	T_1X_1^{-1}=(q^2-1)X_1^{-1}+q^2T_1^{-1}X_1^{-1}=(q^2-1)X_1^{-1}+X_2^{-1}T_1,
	$$
	which is obtained from $(T_1+1)(T_1-q^2)=0$ and $T_1X_1T_1=q^2 X_2$.
	\item If $j_1\ne \kappa$ and $j_2=\kappa$, this is clear from $T_1X_1T_1=q^2 X_2$.
	\item If $j_1=j_2=\kappa$, it follows from the fact that $T_1$ commutes with $X_1^{-1}X_2^{-1}$.\qedhere
\end{enumerate}
\end{proof}

We follow the main idea of \cite{VV96}. Recall that for $\s=(s_1,\dots,s_\ka)$, we have $\tau \s=(s_\ka,s_1,\dots,s_{\ka-1})$ and the superalgebra isomorphism $\widehat \tau_\s:\Es\to \mathscr E_{\tau \s}$, see \eqref{eq:tau}.

For $r\in\Z_{>0}$, define
\beq\label{eq:psi-power}
\Psi_\s^r:=\Psi_{\tau^{r-1}\s}\circ \Psi_{\tau^{r-2}\s}\circ\cdots\circ \Psi_{\tau\s}\circ \Psi_\s,\qquad \Psi_\s^{-r}:=(\Psi_\s^r)^{-1}.
\eeq
We also use the superscript $\s$ to distinguish generators from $\Es$ (also other notations) for various $\s$.

The following proposition is crucial in the proof of Theorem \ref{thm:toroidal}.
\begin{prop}\label{prop:main}
For $1<i<\ka$, we have the following identities in $\End(M\otimes_{\ha}\Vsl)$,
\begin{align*}
&\Psi_\s^{-1}\circ E_i^{\tau\s}(z)\circ \Psi_\s=E_{i-1}^\s(zq_1^{s_\ka}), && \Psi_\s^{-2}\circ E_1^{\tau^2\s}(\zeta z)\circ \Psi_\s^2=E_{\ka-1}^\s(zq_1^{n-m+s_{\ka-1}+s_{\ka}}),\\	
&\Psi_\s^{-1}\circ F_i^{\tau\s}(z)\circ \Psi_\s=F_{i-1}^\s(zq_1^{s_\ka}), && \Psi_\s^{-2}\circ F_1^{\tau^2\s}(\zeta z)\circ \Psi_\s^2=E_{\ka-1}^\s(zq_1^{n-m+s_{\ka-1}+s_{\ka}}),\\
&\Psi_\s^{-1}\circ K_i^{\pm,\tau\s}(z)\circ \Psi_\s=K_{i-1}^{\pm,\s}(zq_1^{s_\ka}), && \Psi_\s^{-2}\circ K_1^{\pm,\tau^2\s}(\zeta z)\circ \Psi_\s^2=K_{\ka-1}^{\pm,\s}(zq_1^{n-m+s_{\ka-1}+s_{\ka}}).
\end{align*}
\end{prop}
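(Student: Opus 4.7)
My approach is to verify each of the six identities by direct computation on a basis vector $w \otimes v_{\bm j}$ with $\bm j$ non-decreasing, using the explicit Drinfeld-current formulas of Corollary~\ref{cor:uvs-action} together with the DAHA identities of Lemmas~\ref{lem:Q} and~\ref{lem:P}. The spine of the calculation is two combinatorial identities that absorb the spectral shifts, namely $\mu_{\tau\s}(i) = s_\ka + \mu_\s(i-1)$ and $\mu_{\tau^2\s}(1) = s_\ka + s_{\ka-1}$, both immediate from $\tau\s = (s_\ka,s_1,\dots,s_{\ka-1})$. Together with the parity matches $|v_j|_\s = |v_{j+1}|_{\tau\s}$ and $|i|_{\tau\s} = |i-1|_\s$, they guarantee that the signs $\iota_\s$ and the shifts $q_1^{\mu_\s(i)}$ on both sides line up once the index shift $i \mapsto i-1$ induced by $\Psi_\s$ is taken into account.

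For the first three identities ($1 < i < \ka$), I would set $\bm j^{-1}(\ka) = (a,\ell]$ and compute
\[
\Psi_\s(w \otimes v_{\bm j}) = wX_{a+1}^{-1}\cdots X_\ell^{-1} \otimes v_{\bm k}, \qquad \bm k = (j_1+1,\dots,j_a+1,1,\dots,1).
\]
The ranges $\bm k^{-1}(i)$ and $\bm k^{-1}(i+1)$ lie entirely in $[1,a]$, where $\bm k$ is non-decreasing, and the trailing wrap-around entries $v_1$ at positions $>a$ cannot contribute to cross-terms in the coproduct of Proposition~\ref{prop:copro} because $x^-_{i,s}$ annihilates $v_1$ for $i \ne 1$. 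Hence Corollary~\ref{cor:uvs-action} applies, yielding a sum $\sum_r c_r \cdot wX_{a+1}^{-1}\cdots X_\ell^{-1} \cdot \mathscr Y_r(z) \otimes v_{\bm k_r^-}$, where $\mathscr Y_r(z)$ is a normal-ordered product in $Y_r,\dots,Y_{a_3}$. Applying $\Psi_\s^{-1}$ re-inserts $X_{a+1}\cdots X_\ell$ on the right, and the identity reduces to showing that this partial product commutes with each $Y_p$, $p \le a$, in $\mathscr Y_r(z)$. I plan to derive this by writing $Y_p = q^{2(p-1)}(T_{p-1}\cdots T_1)^{-1} Y_1 (T_1\cdots T_{p-1})^{-1}$ and observing that $T_i$ commutes with the symmetric product $X_iX_{i+1}$, so $T_i$ for $i \le a-1$ commutes with both $X_1\cdots X_a$ and $X_{a+1}\cdots X_\ell$. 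Then $X_0 Y_p X_0^{-1} = \zeta Y_p$ follows by conjugating $X_0 Y_1 = \zeta Y_1 X_0$ through the $T$-expression for $Y_p$, and the decomposition $X_{a+1}\cdots X_\ell = X_0 \cdot (X_1\cdots X_a)^{-1}$ gives cancellation of the two $\zeta$'s, leaving trivial conjugation.

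For the last three identities, I would iterate: with $\bm j^{-1}(\ka-1) = (b,a]$ and $\bm j^{-1}(\ka) = (a,\ell]$, the doubly-shifted vector is
\[
\Psi_\s^2(w \otimes v_{\bm j}) = w \cdot X_{b+1}^{-1}\cdots X_a^{-1} \cdot X_{a+1}^{-2}\cdots X_\ell^{-2} \otimes v_{\bm k^{(2)}},
\]
whose tail wraps into $\{1,2\}$. The currents $E_1^{\tau^2\s}(\zeta z),F_1^{\tau^2\s}(\zeta z),K_1^{\pm,\tau^2\s}(\zeta z)$ probe positions in $(b,\ell]$, so the $X$-conjugation against $\mathscr Y_r(z)$ now traverses one full $X_0$-wrap at positions $(a,\ell]$ rather than a balanced partial one. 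By $X_0 Y_r X_0^{-1} = \zeta Y_r$ this produces precisely one net factor of $\zeta$, matching the $\zeta$ already present in the spectral argument of the LHS; combining with the iterated spectral shift $q_1^{s_\ka + s_{\ka-1}}$ and the identity $\zeta = q_1^{n-m}$ gives the target parameter $z\,q_1^{n-m+s_\ka+s_{\ka-1}}$ on the RHS.

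The main obstacle is controlling the correction terms that arise when commuting $X_j$ past $Y_p$ in $\daha$: naively each commutation yields a $T_i^{\pm 2}$-factor from relations like $X_2 Y_1 X_2^{-1} = q^2 Y_1 T_1^{-2}$. I expect these to either cancel after forming the symmetric products $X_iX_{i+1}$ inside $X_1\cdots X_a$ and $X_{a+1}\cdots X_\ell$, or to be transferable to the $\Vsl$-factor via the tensor-over-$\ha$ relation $wT_i \otimes v = w \otimes \mathcal T_i v$, where they act trivially on $v_{\bm k_r^-}$ thanks to the structure of that vector in the non-decreasing block. Making this precise, and uniformly across the six identities, is the technical heart of the proof.
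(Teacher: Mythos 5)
Your overall skeleton matches the paper's: direct computation on $w\otimes v_{\bm j}$ for non-decreasing $\bm j$, the spectral-shift identities $\mu_{\tau\s}(i)=s_\ka+\mu_\s(i-1)$ and $\mu_{\tau^2\s}(1)=s_\ka+s_{\ka-1}$, and the parity bookkeeping are all exactly what the paper uses. But there are two genuine gaps, and they are precisely the two places where the paper deploys its one key device. First, after applying $\Psi_\s$ the index tuple $\bm k=(j_1+1,\dots,j_a+1,1,\dots,1)$ is \emph{not} non-decreasing, and you cannot invoke Corollary \ref{cor:uvs-action} on it as you propose: the paper explicitly warns after Proposition \ref{prop:tensor-vect-rep} that the formula fails for non-monotone tuples, and your justification (``$x^-_{i,s}$ annihilates $v_1$ for $i\ne1$'') does not cover the correction terms of Proposition \ref{prop:copro}, which lie in $\mathscr U\mathscr X_+^2\otimes\mathscr U\mathscr X_-$ and involve lowering generators $x^-_{j,s}$ for \emph{all} $j\in I$ (and $x^-_{1,s}$ does not kill $v_1$); moreover the inductive peeling that produces the product ranges and normal ordering requires every suffix to be monotone. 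The paper instead rewrites $w'\otimes v_{\bm k}=w'R_b\otimes v_{\bm k'}$ using a Hecke element $R_b=(\pm1)q^{b(b-\ell)}T_{b,1}T_{b+1,2}\cdots T_{\ell-1,\ell-b}$ pushed through the tensor product over $\ha$, so that $\bm k'$ \emph{is} non-decreasing and the corollary legitimately applies.

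Second, your reduction of the $X$--$Y$ commutation to ``$X_0Y_pX_0^{-1}=\zeta Y_p$ plus trivial conjugation by $X_1\cdots X_a$'' does not work: $X_1\cdots X_aY_p(X_1\cdots X_a)^{-1}$ is not $\zeta Y_p$ for $p\lle a$ (already $X_2Y_1^{-1}X_2^{-1}Y_1=q^{-2}T_1^2$ shows that bare $X$-monomials conjugate $Y$'s only up to Hecke corrections), and you acknowledge at the end that controlling these corrections is unresolved. This is not a detail but the technical heart, and the paper's $R_b$ resolves it simultaneously with the first issue: the product $R_b^{-1}X_\ell\cdots X_{b+1}$ equals, up to a sign and a power of $q$, the element $P_b=Q_{\ell-b,\ell-1}\cdots Q_{1,b}$ built from $Q_{i,j}=X_iT_{i,j}$, and Lemmas \ref{lem:Q} and \ref{lem:P} then give the clean conjugations $Q_{i,j}Y_aQ_{i,j}^{-1}=Y_{a+1}$ and $P_rY_{a+1}P_r^{-1}=\zeta Y_{a-r+1}$ (the latter being where $\zeta$ enters in the wrap-around identities). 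In short: the $X$'s must stay packaged with the $T$'s coming from the reordering; separating them, as your plan does, is what creates the uncancelled $T_i^{\pm2}$ corrections. To repair the proposal you should introduce the reordering element $R_b$ and identify the resulting $X$--$T$ product with $P_b$, after which the computation closes exactly as you envisage.
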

\begin{proof}
We only show identities in the first line. The rests are similar.

We start with the first one. If $1<i<\ka$, it suffices to show that the action of $E_i^{\tau\s}(z)\circ \Psi_\s$ and $\Psi_{\bm s}\circ E_{i-1}^\s(zq_1^{s_\ka})$ on $w\otimes v_{\bm j}^{\s}$ coincides for $w\in M$ and non-decreasing $\ell$-tuple $\bm j$. Put
\[
\bm j^{-1}(i-1)=(a_1,a_2],\quad \bm j^{-1}(i)=(a_2,a_3],\quad \bm j^{-1}(\ka)=(b,\ell],
\]
\[
\bm{j_1}=(j_1+1,\dots,j_{b}+1,1,\dots,1),\quad \bm{j_2}=(1,\dots,1,j_1+1,\dots,j_{b}+1).
\]
Then we have
$$
\bm{j_2}^{-1}(i)=(\ell-b+a_1,\ell-b+a_2],\qquad \bm{j_2}^{-1}(i+1)=(\ell-b+a_2,\ell-b+a_3].
$$
Recall $T_{j,i}$ from \eqref{eq:Tij} and $|\bm j_i|$ from \eqref{eq:parity-j}. Set
$$
R_{b}=(-1)^{(\ell-b)|{v^\s_\kappa||\bm{j}_{b+1}|}}q^{b(b-\ell)}T_{b,1}T_{b+1,2}\cdots T_{\ell-1,\ell-b}.
$$
Here and below, the notation of parity is always the one induced from $\s$.

On one hand, by Corollary \ref{cor:uvs-action}, we have
\begin{align*}
&\ E_i^{\tau\s}(z)\circ \Psi_\s(w\otimes v_{\bm j}^\s)\\=&\ E_i^{\tau\s}(z)(wX_{b}^{-1}X_{b+1}^{-1}\cdots X_{\ell}^{-1}\otimes v^{\tau\s}_{\bm{j_1}})
= E_i^{\tau\s}(z)(wX_{b}^{-1}X_{b+1}^{-1}\cdots X_{\ell}^{-1}R_b\otimes v^{\tau\s}_{\bm{j_2}})\\
=&\ \sum_{r=\ell-b+a_2+1}^{\ell-b+a_3}\iota_{\tau\s}(i,r;\bm{j_2}) wX_{b}^{-1}X_{b+1}^{-1}\cdots X_{\ell}^{-1}R_b:\Big[\delta(q_1^{\mu_{\tau\bm s}(i)}Y_rz)\prod_{p=r+1}^{\ell-b+a_3}\psi_{s_{i}}(q_1^{\mu_{\tau\bm s}(i)}Y_pz)\Big]^+:\otimes v^{\tau\s}_{(\bm{j_2})_{r}^-}.
\end{align*}
On the other hand, note that $\mu_{\tau\s}(i)=s_{\ka}+\mu_\s(i-1)$,  we have
\begin{align*}
&\ \Psi_{\s}\circ E_{i-1}^\s(zq_1^{s_\ka})(w\otimes v_{\bm j}^\s)\\
=&\ \Psi_\s\Big(\sum_{r=a_2+1}^{a_3}\iota_\s (i-1,r;\bm j)w:\Big[\delta(q_1^{\mu_{\tau\bm s}(i)}Y_rz)\prod_{p=r+1}^{a_3}\psi_{s_{i}}(q_1^{\mu_{\tau\bm s}(i)}Y_pz)\Big]^+:\otimes v^\s_{\bm j_{r}^-}\Big)\\
=&\ \sum_{r=a_2+1}^{a_3}\iota_\s (i-1,r;\bm j)w:\Big[\delta(q_1^{\mu_{\tau\bm s}(i)}Y_rz)\prod_{p=r+1}^{a_3}\psi_{s_{i}}(q_1^{\mu_{\tau\bm s}(i)}Y_pz)\Big]^+:X_{b}^{-1}X_{b+1}^{-1}\cdots X_{\ell}^{-1}\otimes v^{\tau\s}_{(\bm{j_1})_{r}^-}\\
=&\ \sum_{r=a_2+1}^{a_3}\iota_{\tau\s}(i,r+\ell-b;\bm{j_2})w:\Big[\delta(q_1^{\mu_{\tau\bm s}(i)}Y_rz)\prod_{p=r+1}^{a_3}\psi_{s_{i}}(q_1^{\mu_{\tau\bm s}(i)}Y_pz)\Big]^+:\\ &\qquad\qquad\qquad\qquad\qquad\qquad\qquad\qquad\qquad\qquad\qquad\qquad\times X_{b}^{-1}X_{b+1}^{-1}\cdots X_{\ell}^{-1}R_b\otimes v^{\tau\s}_{(\bm{j_2})_{r+\ell-b}^-},
\end{align*}
where in the last equality, we used
\[
\iota_{\tau\s}(i,r+\ell-b;\bm{j_2}) (-1)^{(\ell-b)|{v^\s_\kappa||\bm{j}_{b+1}|}}=\iota_\s (i-1,r;\bm j)(-1)^{(\ell-b)|{v^\s_\kappa|(|\bm{j}_{b+1}|-|v_i^{\s}|+|v_{i-1}^{\s}|)}}
\]
which follows from that the parity of $|i-1|$ is the same as that of $|v_{i-1}^{\s}|-|v_i^{\s}|$.

Recall $P_b=Q_{\ell-b,\ell-1}\cdots Q_{2,b+1}Q_{1,b}$ from \eqref{eq:Q-P}. It follows from Lemma \ref{lem:Q} that
\[
P_b:\Big[\delta(q_1^{\mu_{\tau\bm s}(i)}Y_rz)\prod_{p=r+1}^{a_3}\psi_{s_{i}}(q_1^{\mu_{\tau\bm s}(i)}Y_pz)\Big]^+:P_b^{-1}=:\Big[\delta(q_1^{\mu_{\tau\bm s}(i)}Y_{\ell-b+r}z)\prod_{p=r+1}^{a_3}\psi_{s_{i}}(q_1^{\mu_{\tau\bm s}(i)}Y_{\ell-b+p}z)\Big]^+:.
\]
Since $$R_b^{-1}X_{\ell}X_{\ell-1}\cdots X_{b+1}=(-1)^{(\ell-b)|{v^\s_\kappa||\bm{j}_{b+1}|}}q^{b(b-\ell)}P_b,$$
we conclude from the above equations that $E_i^{\tau\s}(z)\circ \Psi_\s(w\otimes v_{\bm j}^\s)=\Psi_{\s}\circ E_{i-1}^\s(zq_1^{s_\ka})(w\otimes v_{\bm j}^\s)$ and hence $$\Psi_{\s}^{-1}\circ E_i^{\tau\s}(z)\circ \Psi_\s= E_{i-1}^\s(zq_1^{s_\ka})$$ in $\End(M\otimes_{\ha}\Vsl)$ for $1<i<\ka$.

\medskip

We then consider the second one. Set 
$\bm j^{-1}(\ka-1)=(a,b]$, $\bm j^{-1}(\ka)=(b,\ell]$, and
\[
\bm{j_1}=(1,\dots,1,2,\dots,2,j_1+2,\dots,j_a+2),\quad
\bm{j_2}=(j_1+2,\dots,j_a+2,1,\dots,1,2,\dots,2)
\]
where
\[\bm{j_1}^{-1}(1)=(0,b-a],\quad
\bm{j_1}^{-1}(2)=(b-a,\ell-a], \quad
\bm{j_2}^{-1}(1)=(a,b],\quad 
\bm{j_2}^{-1}(2)=(b,\ell].
\]
Define $R_a=(-1)^{((b-a)|v^\s_{\ka-1}|+(\ell-b)|v^\s_\ka|)|\bm j_{a+1}|}q^{a(a-\ell)}T_{a,1}T_{a+1,2}\cdots T_{\ell-1,\ell-a}$. We have
\begin{align*}
&\	E_1^{\tau^2\s}(\zeta z)\circ \Psi_\s^2(w\otimes v_{\bm j}^\s)\\
=& E_1^{\tau^2\s}(\zeta z)(wX_{a+1}^{-1}\cdots X_\ell^{-1}\otimes v_{\bm{j_2}}^{\tau^2\s})=E_1^{\tau^2\s}(\zeta z)(wX_{a+1}^{-1}\cdots X_\ell^{-1}R_a\otimes v_{\bm{j_1}}^{\tau^2\s})\\
=&\ \sum_{r=b-a+1}^{\ell-a}\iota_{\tau^2\s}(1,r;\bm{j_1})wX_{a+1}^{-1}\cdots X_\ell^{-1}R_a:\Big[\delta(q_1^{s_{\ka-1}}\zeta Y_rz)\prod_{p=r+1}^{\ell-a}\psi_{s_{\ka}}(q_1^{s_{\ka-1}}\zeta Y_pz)\Big]^+:\otimes v^{\tau^2\s}_{(\bm{j_1})_{r}^-}.
\end{align*}
Note that $\mu_{\s}(\ka-1)+n-m+s_{\ka-1}+s_{\ka}=s_{\ka-1}$, we also have
\begin{align*}
&\ \Psi_\s^2\circ E_{\ka-1}^\s(zq_1^{n-m+s_{\ka-1}+s_{\ka}})	(w\otimes v_{\bm j}^\s)\\
=&\ \Psi_\s^2\Big(\sum_{r=b+1}^\ell \iota_{\s}(\ka-1,r;\bm j)w:\Big[\delta(q_1^{s_{\ka-1}}Y_rz)\prod_{p=r+1}^{\ell}\psi_{s_{\ka}}(q_1^{s_{\ka-1}}Y_pz)\Big]^+:\otimes v^{\s}_{\bm{j}_{r}^-}\Big)\\
=&\ \sum_{r=b+1}^\ell \iota_{\s}(\ka-1,r;\bm j)w:\Big[\delta(q_1^{s_{\ka-1}}Y_rz)\prod_{p=r+1}^{\ell}\psi_{s_{\ka}}(q_1^{s_{\ka-1}}Y_pz)\Big]^+:X_{a+1}^{-1}\cdots X_\ell^{-1}\otimes v^{\tau^2\s}_{(\bm{j_2})_{r}^-}\\
=&\ \sum_{r=b+1}^\ell \iota_{\tau^2\s}(1,r-a;\bm{j_1})w:\Big[\delta(q_1^{s_{\ka-1}}Y_rz)\prod_{p=r+1}^{\ell}\psi_{s_{\ka}}(q_1^{s_{\ka-1}}Y_pz)\Big]^+:X_{a+1}^{-1}\cdots X_\ell^{-1}R_a\otimes v^{\tau^2\s}_{(\bm{j_1})_{r-a}^-},
\end{align*}
where in the last equality we used that the parity of $|\ka-1|$ is the same as that of $|v_{\ka-1}^{\s}|-|v_\ka^{\s}|$. The rest is similar to the previous case by using Lemma \ref{lem:P}.
\end{proof}

\subsection{Proof of part 1}\label{sec:proof-1}
Now we define the action of the series $E_0(z),F_0(z),K_0^\pm(z)$ on $\mc F(M)=M\otimes_{\ha}\Vsl$ by
\begin{align*}
&E_0^\s(z)=\Psi_\s^{-1}\circ E_1^{\tau\s}(q_1^{-s_{\ka}}z)\circ \Psi_\s,\\
&F_0^\s(z)=\Psi_\s^{-1}\circ F_1^{\tau\s}(q_1^{-s_{\ka}}z)\circ \Psi_\s,\\
&K_0^{\pm,\s}(z)=\Psi_\s^{-1}\circ K_1^{\pm,\tau\s}(q_1^{-s_{\ka}}z)\circ \Psi_\s.
\end{align*}
If $\zeta=q_1^{n-m}$, then it follows from Proposition \ref{prop:main} that we have
\begin{align*}
&E_i^\s(z)=\Psi_\s^{-1}\circ E_{i+1}^{\tau\s}(q_1^{-s_{\ka}}z)\circ \Psi_\s,\\
&F_i^\s(z)=\Psi_\s^{-1}\circ F_{i+1}^{\tau\s}(q_1^{-s_{\ka}}z)\circ \Psi_\s,\\
&K_i^{\pm,\s}(z)=\Psi_\s^{-1}\circ K_{i+1}^{\pm,\tau\s}(q_1^{-s_{\ka}}z)\circ \Psi_\s,
\end{align*}
for all $i\in \hat I$ and any desired $\bm s$. Here we read indices modulo $\ka$. Recall the isomorphism $\widehat{\tau}_\s$ defined in \eqref{eq:tau}, then we have $\Psi_\s \circ E_i^{\s}(z)\circ \Psi_\s^{-1}=\widehat \tau_\s(E_i^{\s}(z))$ for all $i\in \hat I$.

Under this action, it is straightforward that \eqref{eq:zero-act1} is true. Thus, if these operators do define an $\Es$-action on $\mc F(M)$, then the $\Es$-module structure defined in Theorem \ref{thm:toroidal} is also well-defined. Moreover, these two $\Es$-module structures coincide. In particular, it is straightforward to check that $K_{0}K_{1}\cdots K_{\ka-1}(w\otimes \bm v)=w\otimes \bm v$ for all $w\in M$ and $\bm v\in\Vsl$.

Similarly to \eqref{eq:psi-power}, we use the convention
$$
\widehat{\tau}^r_\s:=\widehat\tau_{\tau^{r-1}\s}\circ \cdots \circ \widehat\tau_{\tau\s} \circ \widehat\tau_{\s},
$$
where $r\in\Z_{>0}$. To simplify the notation, we drop the dependence of $\s$ in $\widehat \tau_\s$ and $\Psi_\s$ but keep $\s$ in generating series.

\begin{proof}[Proof of Theorem \ref{thm:toroidal}, part 1]
By Corollary \ref{cor:uvs-action}, the operators $E_{i}^\s(z)$, $F_i^\s(z)$, $K_i^{\pm,\s}(z)\in \End(M\otimes_\ha\Vsl)$ satisfy relations in Definition \ref{defE} of $\Es$ for $i\in I$. To verify all the other relations, it suffice to check the relations involving $\widehat\tau^r(E_i^{\tau^{-r}\s}(z))$, $\widehat\tau^r(F_i^{\tau^{-r}\s}(z))$, $\widehat\tau^r(K_i^{\pm,\tau^{-r}\s}(z))$ for $r=1,\dots,\ka-1$ and $i\in I$ which are also  the relations of $\mathscr E_{\tau^{-r}\s}$ for $i\in I$. By construction, these operators are equal to $\Psi^r \circ E_i^{\tau^{-r}\s}(z)\circ \Psi^{-r}$, $\Psi^r \circ F_i^{\tau^{-r}\s}(z)\circ \Psi^{-r}$, $\Psi^r \circ K_i^{\pm,\tau^{-r}\s}(z)\circ \Psi^{-r}$, respectively. Since by Corollary \ref{cor:uvs-action}, $E_i^{\tau^{-r}\s}(z)$, $F_i^{\tau^{-r}\s}(z)$, $K_i^{\pm,\tau^{-r}\s}(z)$ satisfy the relations of $\mathscr E_{\tau^{-r}\s}$ for $i\in I$, we are done.

The fact that the $\Es$-module $\scrF(M)$ has trivial central charge is clear from Theorem \ref{thm:affine} and Corollary \ref{cor:uvs-action}. The integrability of $\scrF(M)$ follows from that of $\Vsl$. Moreover, by Theorem \ref{thm:finite}, $\scrF(M)$ is of level $\ell$, see Remark \ref{rem:integrable}.
\end{proof}

\subsection{Proof of part 2}
Assume for the reminder of the proof that $\ell<\ka-2$. We show that $\scrF$ is an equivalence of categories, which means we must prove that
\begin{enumerate}
\item (Surjectivity) every integrable $\Es$-module $\mathcal M$ with trivial central charge and of level $\ell$	is isomorphic to  $\scrF(M)$ for some $\daha$-module $M$;
\item (Fully faithfulness) $\scrF$ is bijective on sets of morphisms.
\end{enumerate}

We need the following useful lemma.
\begin{lem}\label{lem:inj}
\emph{(1)} If $\bm v$ is a generator of $\Vsl$ as a module over $\uqsls$, then $w\otimes \bm v\in M\otimes_{\ha}\Vsl$ is zero if and only if $w=0$.	\\
\emph{(2)} If $j_1,\dots,j_\ell\in (0,\ka]$ are pairwise distinct, then $v_{\bm j}$ is a generator of $\Vsl$ over $\uqsls$. 
\end{lem}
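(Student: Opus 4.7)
I treat part (2) first, since the cyclicity statement there feeds directly into (1).

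\emph{Part (2).} Denote by $\mu$ the $\uqsls$-weight of $v_{\bm j}$. Since $j_1,\dots,j_\ell$ are pairwise distinct, the $\mu$-weight space of $\Vsl$ is precisely the span of $\{v_{\sigma(\bm j)}:\sigma\in\fkS_\ell\}$, which has dimension $\ell!=\dim\ha$. The explicit formula for $\mathcal T_i$ in Theorem \ref{thm:finite} shows that whenever $k_i\ne k_{i+1}$ the vector $T_iv_{\bm k}$ contains $v_{s_i\bm k}$ with nonzero coefficient; since every permutation of $\bm j$ still has pairwise distinct entries, iterating along a reduced word for $\sigma$ yields $\ha\cdot v_{\bm j}=(\Vsl)_\mu$. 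Matching dimensions, $h\mapsto hv_{\bm j}$ is then an isomorphism of left $\ha$-modules $\ha\xrightarrow{\sim}(\Vsl)_\mu$. Next invoke the Schur--Weyl $(\uqsls,\ha)$-bimodule decomposition $\Vsl=\bigoplus_\lambda L_\lambda\otimes S_\lambda$, whereby the image of $\uqsls$ in $\End(\Vsl)$ equals $\End_\ha(\Vsl)=\bigoplus_\lambda\End(L_\lambda)$. Writing $v_{\bm j}=\sum_\lambda v^\lambda$ with $v^\lambda\in L_\lambda^\mu\otimes S_\lambda$, the iso $(\Vsl)_\mu\cong\ha\cong\bigoplus_\lambda\End(S_\lambda)$ forces $\dim L_\lambda^\mu=\dim S_\lambda$ and identifies $v^\lambda$ with $\mathrm{id}_{S_\lambda}\in\End(S_\lambda)$. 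Writing $v^\lambda=\sum_i e_i\otimes s_i$ for a basis $\{e_i\}$ of $L_\lambda^\mu$ corresponding to a basis $\{s_i\}$ of $S_\lambda$, any $l\otimes s=l\otimes\sum_ic_is_i$ is realized as $\phi\cdot v^\lambda$ by choosing $\phi\in\End(L_\lambda)$ with $\phi(e_i)=c_il$ (extended arbitrarily to a complement of $L_\lambda^\mu$ in $L_\lambda$). Hence $\End(L_\lambda)\cdot v^\lambda=L_\lambda\otimes S_\lambda$ for each $\lambda$, and summing gives $\uqsls\cdot v_{\bm j}=\Vsl$.

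\emph{Part (1).} By Theorem \ref{thm:finite} the actions of $\uqsls$ and $\ha$ on $\Vsl$ commute, so $X\cdot(w\otimes\bm u):=w\otimes X\bm u$ is a well-defined $\uqsls$-action on $M\otimes_\ha\Vsl$. If $w\otimes\bm v=0$ and $\uqsls\bm v=\Vsl$, applying every $X\in\uqsls$ gives $w\otimes\bm u=0$ for all $\bm u\in\Vsl$. Consider now the cyclic right $\ha$-submodule $w\ha\subseteq M$: because $q$ is not a root of unity, $\ha$ is semisimple, so $w\ha$ is a direct summand of $M$ and the induced map $w\ha\otimes_\ha\Vsl\hookrightarrow M\otimes_\ha\Vsl$ is injective. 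The hypothesis forces its image to vanish, whence $w\ha\otimes_\ha\Vsl=0$. Since $\ell<\ka-2\le mn+\ka$, Theorem \ref{thm:finite} together with Remark \ref{rem:integrable} implies that $(-)\otimes_\ha\Vsl$ is an equivalence of categories; in particular $\Vsl$ is an $\ha$-progenerator and this functor is faithful, so $w\ha=0$ and $w=0$.

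\emph{Anticipated obstacle.} The most delicate step is the identification in (2) of $v^\lambda$ with $\mathrm{id}_{S_\lambda}$ under $(\Vsl)_\mu\cong\ha$: this is where parity signs from the super setting have to be tracked carefully, and where one must confirm that the Schur--Weyl bimodule decomposition really takes the expected form $\bigoplus_\lambda L_\lambda\otimes S_\lambda$ with the $\ha$-sides being mutually non-isomorphic simples. Once the $(\uqsls,\ha)$-bimodule framework is in place, the rest of (2) and all of (1) reduce to formal consequences of Theorem \ref{thm:finite} plus the semisimplicity of $\ha$.
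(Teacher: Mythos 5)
Your argument is correct and rests on exactly the ingredients the paper invokes — the $(\uqsls,\ha)$-bimodule decomposition behind Theorem \ref{thm:finite}, semisimplicity of $\ha$ (Remark \ref{rem:integrable}), and the standing hypothesis $\ell<\ka-2$ that makes $\mc J$ an equivalence — the paper simply cites these without spelling out the details you supply. No discrepancy to report.
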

\begin{proof}
The first statement follows directly from Theorem \ref{thm:finite} and Remark \ref{rem:integrable} while the second one is clear.	
\end{proof}

\begin{proof}[Proof of Theorem \ref{thm:toroidal} part 2]
	Let $\mc M$ be an integrable $\Es$-module with trivial central charge and of level $\ell$.	Then the restriction of $\mc M$ to $\uvs$ is integrable with trivial central charge and of level $\ell$. Since $\aha^{(1)}$ is isomorphic to $\aha$, it follows from Theorem \ref{thm:affine} and Remark \ref{rem:affine} that there exists an $\aha^{(1)}$-module $M^{(1)}$ such that $\mc M\cong M^{(1)}\otimes_\ha\Vsl$ as $\uvs$-modules. Similarly, there exists an $\aha^{(2)}$-module $M^{(2)}$ such that $\mc M\cong M^{(2)}\otimes_\ha\Vsl$ as $\uhs$-modules. Moreover, these two modules $M^{(1)}$ and $M^{(2)}$ are isomorphic as $\ha$-modules. Hence we denote them by $M$.
	
	The action of $\uvs$ on $M\otimes_\ha \Vsl$ is as in Corollary \ref{cor:uvs-action} while the action of $E_{0},F_{0},K^\pm_{0}$ is as in \eqref{eq:zero-act1}. Note that the action of $X_i,Y_j\in \daha$ on $M$ is given by the $\aha^{(2)}$-module and the $\aha^{(1)}$-module structure of $M$, respectively. We would like to show that these two actions extend to an $\daha$-module structure on $M$. By Proposition \ref{prop:presentation}, it suffices to show that for any $w\in M$, we have
\beq\label{eq:toshow}
wQY_{i-1}Q^{-1}=wY_{i}\ \ (1< i\lle \ell),\qquad wQY_\ell Q^{-1}=\zeta wY_1,
\eeq
where $Q=X_1T_{1,\ell-1}=X_1T_1\cdots T_{\ell-1}$.

\medskip

We first show $wQY_{i-1}Q^{-1}=wY_{i}$ for $1<i\lle \ell$. Fix $1<i\lle\ell$. Set 
$$
\bm v=v_1\otimes\cdots \otimes v_i\otimes v_{i+2}\otimes \cdots\otimes v_{\ell+1},\quad \bm{\tl v}=v_2\otimes\cdots \otimes v_i\otimes v_{i+2}\otimes \cdots\otimes v_{\ell+1}\otimes v_{\ka}.
$$
Then it is clear that
\[
E_{0}(w\otimes \bm v)=(-1)^{|v_{\ka}|(|v_2|+\cdots+|v_i|+|v_{i+2}|+\cdots+|v_{\ell+1}|)}q^{1-\ell}wQ\otimes \bm{\tl v}.
\]We have
\begin{align*}
	E_{0}K_i^+(z)(w\otimes \bm v)=&\ E_{0}\big(w\psi_{-s_i}^+(q_1^{\mu_\s(i)}Y_iz)\otimes \bm v\big)\\
	=&\ (-1)^{|v_{\ka}|(|v_2|+\cdots+|v_i|+|v_{i+2}|+\cdots+|v_{\ell+1}|)}q^{1-\ell}w\psi_{-s_i}^+(q_1^{\mu_\s(i)}Y_iz)Q\otimes \bm{\tl v}
\end{align*}
and
\begin{align*}
	K_i^+(z)E_{0}(w\otimes \bm v)=&\ (-1)^{|v_{\ka}|(|v_2|+\cdots+|v_i|+|v_{i+2}|+\cdots+|v_{\ell+1}|)}q^{1-\ell}K_i^+(z)wQ\otimes \bm{\tl v}\\
	=&\ (-1)^{|v_{\ka}|(|v_2|+\cdots+|v_i|+|v_{i+2}|+\cdots+|v_{\ell+1}|)}q^{1-\ell}wQ\psi_{-s_i}^+(q_1^{\mu_\s(i)}Y_{i-1}z)\otimes \bm{\tl v}.
\end{align*}
Note that $E_{0}K_i^+(z)=K_i^+(z)E_{0}$ and $\bm{\tl v}$ is a generator of $\Vsl$ over $\uqsls$. It follows from Lemma \ref{lem:inj} that $w\psi_{-s_i}^+(q_1^{\mu_\s(i)}Y_iz)Q=wQ\psi_{-s_i}^+(q_1^{\mu_\s(i)}Y_{i-1}z)$. In particular, $wQY_{i-1}Q^{-1}=wY_{i}$ for $1<i\lle\ell$.

\medskip 

Then we show $wQY_\ell Q^{-1}=\zeta wY_1$. By taking the coefficients of $zw$ in \eqref{KE}, we have
\[
d^{-m_{i,j}^\s}(E_{j}K_{i,-1}-q^{a_{ij}^\s}K_{i,-1}E_{j})K_i=(q^{a_{ij}^\s}-q^{-a_{ij}^\s})E_{j,-1}.
\]Note that $m_{1,0}^\s=-a_{1,0}^\s=s_1$ and $m_{\ka-1,0}^\s=a_{\ka-1,0}^\s=-s_{\ka}$, we have
\[
s_1d^{-s_1}(E_{0}K_{1,-1}-q^{-s_1}K_{1,-1}E_{0})K_1=s_\ka d^{s_\ka}(E_{0}K_{\ka-1,-1}-q^{-s_\ka}K_{\ka-1,-1}E_{0})K_{\ka-1}.
\]
Set $\bm v=v_1\otimes v_3\otimes v_4\otimes \cdots\otimes v_{\ell+1}$ and $\bm{\tl v}=v_3\otimes v_4\otimes \cdots\otimes v_{\ell+1}\otimes v_{\ka}$. We have
\[
E_{0}(w\otimes\bm v)=(-1)^{|v_\ka|(|v_3|+\cdots+|v_{\ell+1}|)}q^{1-\ell}wQ\otimes \bm{\tl v}.
\]
A direct computation implies that
\begin{align*}
&\	s_1d^{-s_1}(E_{0}K_{1,-1}-q^{-s_1}K_{1,-1}E_{0})K_1(w\otimes \bm v)\\
=&\ s_1d^{-s_1}E_{0}K_{1,-1}K_1(w\otimes \bm v)
=(-1)^{|v_\ka|(|v_3|+\cdots+|v_{\ell+1}|)}q^{1-\ell}s_1(q^{-s_1}-q^{s_1})wY_1Q\otimes \bm{\tl v}.
\end{align*}
Similarly, one has
\begin{align*}
&\ s_\ka d^{s_\ka}(E_{0}K_{\ka-1,-1}-q^{-s_\ka}K_{\ka-1,-1}E_{0})K_{\ka-1}(w\otimes \bm v)\\
=&\	-s_\ka q_1^{s_\ka}K_{\ka-1,-1}E_{0}K_{\ka-1}(w\otimes \bm v)
= (-1)^{|v_\ka|(|v_3|+\cdots+|v_{\ell+1}|)}q^{1-\ell}s_\ka(q^{-s_\ka}-q^{s_\ka})q_1^{m-n}wQY_\ell\otimes \bm{\tl v},
\end{align*}
where we used $\ell+1 <\ka-1$ and $s_1+s_2+\cdots+s_\ka=m-n$. Since $\bm{\tl v}$ is a generator of $\Vsl$ over $\uqsls$, it follows from Lemma \ref{lem:inj} that $wY_1Q=q_1^{m-n}wQY_\ell$. Note that $\zeta=q_1^{n-m}$, we conclude that $wQY_\ell Q^{-1}=\zeta wY_1$.

\medskip
It remains to show that the functor $\scrF$ is fully faithful. The fact that $\scrF$ is injective on morphisms is clear from Theorem \ref{thm:affine}. To show $\scrF$ is surjective on morphisms, one only needs to use Theorem \ref{thm:affine} and the fact that $\daha$ is generated by the subalgebras $\aha^{(1)}$ and $\aha^{(2)}$.
\end{proof}


\begin{thebibliography}{0000000}

\bibitem[Ara99]{Ara}
T. Arakawa,
{\it Drinfeld functor and finite-dimensional representations of Yangian},
Comm. Math. Phys. \textbf{205} (1999), no. 1, 1--18.

\bibitem[BL22]{BL22}
L. Bittmann, J.-R. Li,
\textit{On the simplicity of the tensor product of two simple modules of quantum affine algebras}, preprint, \href{http://arxiv.org/abs/2203.17268}{arXiv:2203.17268}.

\bibitem[BM21a]{BM19}
L. Bezerra, E. Mukhin, 
{\it Braid actions on quantum toroidal superalgebras}, J. Algebra, {\bf 585} (2021), 338--369.

\bibitem[BM21b]{BM21}
L. Bezerra, E. Mukhin, 
{\it Representations of quantum toroidal superalgebras and plane $\mathbf s$-partitions}, preprint, \href{http://arxiv.org/abs/2104.05841}{arXiv:2104.05841}.

\bibitem[CP91]{CP91}
 V. Chari, A. Pressley,
 {\it Quantum affine algebras}, Commun. Math. Phys. {\bf 142} (1991), 261--283. 

\bibitem[CP96]{CP96}
V. Chari, A. Pressley,
{\it Quantum affine algebras and affine Hecke algebras}, Pacific J. Math. \textbf{174} (1996), no. 2, 295--326.

\bibitem[Che87]{Che87}
I. Cherednik, 
{\it A new interpretation of Gelfand-Zetlin bases}, Duke Math. J. \textbf{54}
(1987), 563--577.

\bibitem[Che92]{Che92}
 I. Cherednik, {\it Double affine Hecke algebras, Knizhnik-Zamolodchikov equations, and Macdonald's operators}, Int. Math. Res. Not. \textbf{1992} (1992), no. 9, 171--180.

\bibitem[Dri86]{Dri86} 
V. Drinfeld,
{\it Degenerate affine Hecke algebras and Yangians}, Funct. Anal. Appl. \textbf{20} (1986), no. 1, 62--64.

\bibitem[Fli20]{Fli18}
Y. Flicker,
{\it Affine quantum super Schur-Weyl duality}, Algebr. Represent. Theory \textbf{23} (2020), 135--167.

\bibitem[GKV95]{GKV95}
 V. Ginzburg, M. Kapranov, E. Vasserot, 
 {\it Langlands reciprocity for algebraic surfaces}, Math. Res. Lett. {\bf 2} (1995), no. 2, 147--160.

\bibitem[GRV94]{GRV94}
V. Ginzburg, N. Reshetikhin, E. Vasserot,
{\it Quantum groups and
flag varieties}, Contemp. Math., {\bf 175} (1994), 101--130.

\bibitem[Gua05]{Gua05}
N. Guay,
{\it Cherednik algebras and Yangians},
Int. Math. Res. Not.,{\bf 2005} (2005), no. 55, 3551--3593.

\bibitem[Gua07]{Gua07}
N. Guay,
{\it
Affine Yangians and deformed double current algebras in type A},
Adv. Math. {\bf 211} (2007), no. 2, 436--484.


\bibitem[Jim86]{Jim86}
M. Jimbo, 
{\it A $q$-analogue of $\mathrm U(\mathfrak{gl}(N+1))$, Hecke algebra and the Yang-Baxter equation}, Lett. Math.
Phys. {\bf 11} (1986), 247--252.

\bibitem[KL22]{KL20}
J.-H. Kwon, S.-M. Lee,
{\it Super duality for quantum affine algebras of type A}, Int. Math. Res. Not. \textbf{2022}, no. 22 (2022): 18446–18525.

\bibitem[Lu21]{Lu21}
K. Lu,
\emph{Gelfand–Tsetlin bases of representations for super Yangian and quantum affine superalgebra}. Lett. Math.
Phys. \textbf{111}, no. 6 (2021): 1-30. 

\bibitem[LM21]{LM20}
K. Lu, E. Mukhin,
\emph{Jacobi–Trudi identity and Drinfeld functor for super Yangian}. Int. Math. Res. Not. \textbf{2021}, no. 21 (2021): 16749-16808.

\bibitem[LYZ22]{LYZ22}
H. Lin, H. Yamane, H. Zhang,
{\it On generators and defining relations of quantum affine superalgebra $U_q(\widehat{\mathfrak{sl}}_{m|n})$}, J. Algebra Appl. (2022).

\bibitem[Mit06]{Mit06} 
H. Mitsuhashi, 
{\it Schur-Weyl reciprocity between the quantum superalgebra and the Iwahori-Hecke
algebra}, Algebr. Represent. Theory {\bf 9} (2006), 309--322.

\bibitem[Moo03]{Moo03} 
D. Moon, 
{\it Highest weight vectors of irreducible representations of the quantum superalgebra
$\mathfrak U_q(\mathrm{gl}(m, n))$}, J. Korean Math. Soc. {\bf 40} (2003), 1--28.

\bibitem[Rou05]{Rou05}
R. Rouquier,
{\it Representations of rational Cherednik algebras},  Infinite-dimensional aspects of representation theory and applications, 103--131, Contemp. Math., \textbf{392}, Amer. Math. Soc., Providence, RI, 2005.


\bibitem[Ued19]{Ued19}
M. Ueda, 
\emph{Affine super Yangian}, \href{http://arxiv.org/abs/1911.06666}{math.RT/1911.06666}, to appear in Publ. Res. Inst. Math. Sci., 1--58. 

\bibitem[VV96]{VV96}
E. Vasserot, M. Varagnolo,
{\it Schur duality in the toroidal setting}, Comm. Math. Phys., {\bf 182} (1996), no. 2, 469--483.

\bibitem[VV22]{VV22}
E. Vasserot, M. Varagnolo,
{\it K-theoretic Hall algebras, quantum groups and super
quantum groups}, Sel. Math. New Ser. \textbf{28} (2022), no. 7, 56 pp.

\bibitem[Yam99]{Yam99} 
H. Yamane,
{\it On defining relations of affine Lie superalgebras and affine quantized universal enveloping superalgebras}, 
	Publ. RIMS, Kyoto Univ. {\bf 35} (1999), 321--390.
	
\bibitem[Zha14]{Zha14}
 H. Zhang, 
 {\it Representations of quantum affine superalgebras}, Math. Z. {\bf 278} (2014), 663--703.

\bibitem[Zha16]{Zha16}
 H. Zhang, 
 {\it 
RTT realization of quantum affine superalgebras and tensor products}, Int. Math.
Res. Notices {\bf 2016} (2016), 1126--1157.

\end{thebibliography}
\end{document}